\numberwithin{equation}{section}
\font\script=rsfs10 at 11pt
\def\eps{\varepsilon}
\def\H{{\mbox{\script H}\,\,}}
\def\C{\mathcal C}
\def\Cper{C_{\rm per}}
\def\E{\mathcal E}
\def\F{\mathcal F}
\def\G{\mathcal G}
\def\R{\mathbb R}
\def\S{\mathbb S}
\def\SS{\mathcal S}
\def\N{\mathbb N}
\def\bal{\begin{aligned}}
\def\eal{\end{aligned}}
\def\proofof#1{\begin{proof}[Proof of #1]}
\def\step#1#2{\par\noindent{\underline{\it Step~#1.}}\emph{ #2}\\}
\def\XXint#1#2#3{{\setbox0=\hbox{$#1{#2#3}{\int}$} \vcenter{\vspace{-1pt}\hbox{$#2#3$}}\kern-.5\wd0}}
\def\comp{\subset\subset}
\newcommand{\res}{\mathop{\hbox{\vrule height 7pt width .5pt depth 0pt \vrule height .5pt width 6pt depth 0pt}}\nolimits}
\newcounter{mt}
\def\maintheoremdeclaration#1{\stepcounter{mt}\newcounter{#1}\setcounter{#1}{\arabic{mt}}}
\newtheorem{theorem}{Theorem}[section]
\newtheorem{lemma}[theorem]{Lemma}
\newtheorem{corol}[theorem]{Corollary}
\newtheorem{defin}[theorem]{Definition}
\newtheorem{remark}[theorem]{Remark}
\begin{document}

\title{The $\eps-\eps^\beta$ property for clusters with double density}

\author{Vincenzo Scattaglia}
\author{A. Pratelli}

\maketitle

\begin{abstract}
This article is devoted to extend the ``$\eps-\eps^\beta$ property'' to the case of clusters in an Euclidean space with a double density.
\end{abstract}

\section{Introduction}

A fundamental tool in the study of isoperimetric problems is a property that we can call ``$\eps-\eps$ property'' of a set $E$. It basically says that it is possible to modify the volume of $E$ of a small quantity $\eps$, at most increasing the perimeter of a quantity $C|\eps|$. It is simple to imagine how can this property be useful. In fact, to show good features of isoperimetric sets (for instance regularity or boundedness of the boundary), one usually notices that if one of these good features fails then it is possible to slightly modify the set, changing the volume by a small quantity $\eps$ and decreasing the perimeter much more than $|\eps|$. One has then to build a competitor, and the ``$\eps-\eps$ property'' is exactly what one needs in order to readjust the volume without wasting the whole gain in the perimeter. As a very simple example, a planar set with a corner cannot be isoperimetric since ``cutting the edge'' for a length $r\ll 1$ changes the volume of order $r^2$ and decreases the perimeter of order $r$. This property is so crucial that some version of it is used in most of the papers regarding isoperimetric problems.\par

This property is obviously valid for every ${\rm C}^1$ set $E$ with the standard Euclidean volume and perimeter. On the contrary, it may easily fail when one considers weighted volume and perimeter, a case which has been classically studied since decades but which gained a particular interest in recent years. The definition of weighted volume and perimeter is very simple; namely, one is given two so-called \emph{densities}, i.e. two l.s.c. functions $f:\R^N\to (0,+\infty)$ and $g:\R^N\times\S^{N-1}\to (0,+\infty)$, and volume and perimeter of a set $E\subseteq\R^N$ of finite perimeter are given by
\begin{align}\label{defvolper}
|E| = \int_E f(x)\, d\H^N(x)\,, && P(E) = \int_{\partial^* E} g(x,\nu_E(x))\, d\H^{N-1}(x)\,,
\end{align}
where $\partial^* E$ is the reduced boundary of $E$ and $\nu_E(x)$ is the outer normal of $E$ at $x\in\partial^*E$. The standard Euclidean case corresponds then to the choice $f\equiv g\equiv 1$. As said above, for general densities the $\eps-\eps$ property can fail. However, one can notice that for a bad set it is usually possible to decrease the perimeter at a higher order than the change in volume, for instance in the example above of a planar set with a corner the gain in perimeter was of order $\sqrt{|\eps|}$, being $\eps$ the volume variation. More in general, in $\R^N$ usually the gain in perimeter is at least of order $|\eps|^{\frac{N-1}N}$. As a consequence, to obtain competitors one does not really need the $\eps-\eps$ property, it is sufficient to have an ``$\eps-\eps^\beta$ property'', that is, one can modify the volume of a small quantity $\eps$ at most increasing the perimeter of $C|\eps|^\beta$ (the formal definition will be given later, in Definition~\ref{defpropeb}). Thanks to what we have just said, it is clear that one would like to have $\beta>\frac{N-1}N$, or at least $\beta=\frac{N-1}N$ with an arbitrarily small constant $C$, so to adjust the volume with still some gain in perimeter.\par

The study of the validity of the $\eps-\eps^\beta$ property for general densities has been started in~\cite{CP}. There, the case of a ``single density'' was considered, that is, the case when $g(x,\nu)=f(x)$ for every $x\in\R^N,\, \nu\in\S^{N-1}$. This is a sort of ``intermediate case'' between the standard Euclidean case and the general case of a double density. This intermediate case is in fact the one considered in several papers dealing with isoperimetric problems with density, but the general case with double density is much more complete, and it appears quite naturally while studying, for instance, Riemannian manifolds with density (see for instance the book~\cite{Morganbook}). In the paper~\cite{CP} it was proved that, as soon as the density $g$ is $\alpha$-H\"older, then the $\eps-\eps^\beta$ property holds, with some $\beta=\beta(\alpha,N)$, given in~(\ref{defbeta}). The value of $\beta$ ranges from $\frac{N-1}N$, in the limit when $\alpha\searrow 0$, and $1$, when $\alpha=1$, so in particular the full $\eps-\eps$ property holds as soon as $g$ is Lipschitz. In the limit case $\alpha=0$, if the density is continuous then the $\eps-\eps^{\frac{N-1}N}$ property holds with an arbitrarily small constant $C$, and as said above this is enough for most purposes.\par

The result was later extended in~\cite{PS19} to the case of a double density. The result is the same, in particular the exponent $\beta$ is still given by~(\ref{defbeta}), and the construction follows the same scheme, but there are some technical complications to solve. The reason for the troubles is not the fact that $f\neq g$, since estimates about volume and about perimeter are actually always distinct, but the fact that $g$ can depend also on the direction and not only on the point.\par

In this paper, we establish the $\eps-\eps^\beta$ property in the case of clusters with a double density. The study of clusters is an important issue among the isoperimetric problems, extensively studied since the pioneering works~\cite{Tay,HMRR,Rei}. A ``cluster'' is simply a finite collection $\E=\{E_1,\, E_2,\, \dots\,,\, E_m\}$ of pairwise disjoint sets in $\R^N$. In the classical Euclidean setting, its ``volume'' and ``perimeter'' are respectively given by
\begin{align*}
\Big( \H^N(E_1),\, \H^N(E_2),\, \dots\,,\, \H^N(E_m) \Big) \in (\R^+)^m \,, && \H^{N-1} \Big( \bigcup\nolimits_{i=1}^m \partial^* E_i\Big)\,.
\end{align*}
In the general setting of a double density that we are considering, it is clear that the volume has to be considered in the sense of~(\ref{defvolper}), that is, we set
\begin{equation}\label{volcl}
\big| \E \big| = \big(|E_1|,\, |E_2|,\, \dots\, ,\, |E_m|\big) \in (\R^m)^+\,.
\end{equation}
Concerning the perimeter, its definition cannot solely depend on the union $\partial^*\E := \cup \partial^* E_i$ of the boundaries. Indeed, in this union the information about the direction of the normal vector at any point is implicitly contained, but only up to a multiplication times $\pm 1$, and this is not enough since $g(x,-\nu)$ does not necessarily coincide with $g(x,\nu)$. It is simple to observe that the correct generalisation of the notion of the perimeter is given by
\begin{equation}\label{percl}
P(\E) = \frac{\sum_{i=1}^m P(E_i) + P\Big(\bigcup\nolimits_{i=1}^m E_i \Big)}2\,,
\end{equation}
which clearly coincides with
\[
\int_{\partial^*\E} g(z)\, d\H^{N-1}(z)
\]
in the simplified case when the function $g$ only depends on the first variable. We are now in position to present the claim of our main result, which is the validity of the $\eps-\eps^\beta$ property in the isoperimetric problem with clusters, extending the result of~\cite{CP,PS19}.

\begin{theorem}\label{main}
Let us assume that $f:\R^N\to (0,+\infty)$ and $g:\R^N\times \S^{N-1}\to (0,+\infty)$ are two l.s.c. and locally bounded functions, and that $g$ is locally $\alpha$-H\"older in the first variable for some $0\leq \alpha\leq 1$. Let moreover $\E$ be a $m$-cluster of finite perimeter in $\R^N$. Then, the $\eps-\eps^\beta$ property holds for $\E$, where $\beta=\beta(\alpha,N)$ is given by
\begin{equation}\label{defbeta}
\beta=\frac{\alpha+(N-1)(1-\alpha)}{\alpha + N(1-\alpha)}\,.
\end{equation}
In addition, if $\alpha=0$ (thus $\beta=\frac{N-1}N$) and $g$ is continuous in the first variable, then the constant $\Cper$ in Definition~\ref{defpropeb} can be taken arbitrarily small, in the sense of Remark~\ref{remCsmall}.
\end{theorem}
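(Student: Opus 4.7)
The strategy I would adopt is to reduce the cluster case to the single--set result of~\cite{PS19}, by means of several independent local modifications performed at well--separated points of the reduced boundary $\partial^*\E$. Given a target volume variation $\eps=(\eps_1,\dots,\eps_m)\in\R^m$ which is small, the plan is to realise $\eps$ as a combination of finitely many \emph{elementary moves} whose geometric supports are pairwise disjoint balls; on each such ball the cluster locally reduces to a single set against its complement, and the construction of~\cite{PS19} is directly available.

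More precisely, I would proceed as follows. Set $E_0:=\R^N\setminus\bigcup_{i\ge 1}E_i$ and consider the ``interface graph'' $G$ with vertex set $\{0,1,\dots,m\}$ and edges $\{i,j\}$ whenever $\H^{N-1}(\partial^*E_i\cap\partial^*E_j)>0$; $G$ may be assumed connected, else the cluster splits into independent subclusters to treat separately. Fix a spanning tree of $G$: each of its $m$ edges $\{i,j\}$ carries an elementary move, consisting in transferring a volume $t$ from $E_j$ to $E_i$ across their common interface, with volume--effect vector $t(e_i-e_j)\in\R^m$ (with the convention $e_0:=0$). These $m$ vectors form a basis of $\R^m$ because the tree spans $G$. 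For each such edge I would select a ``good point'' $x_{ij}\in\partial^*E_i\cap\partial^*E_j$ in the sense of~\cite{PS19}, i.e.\ a point where the approximate tangent plane and the densities $f$ and $g$ enjoy Lebesgue--type regularity, together with a small radius $r_{ij}>0$ such that the ball $B(x_{ij},r_{ij})$ meets $\partial^*\E$ only along the interface between $E_i$ and $E_j$ (up to an $\H^{N-1}$--negligible set) and such that all the balls are mutually disjoint.

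Applying the construction of~\cite{PS19} inside $B(x_{ij},r_{ij})$ to $E_i$ (which locally is complementary to $E_j$) transfers volume $t$ as prescribed and bounds the variation of $P(E_i)$ by $C|t|^\beta$; the analogous application to $E_j$ yields the same bound for $P(E_j)$, while $P(\bigcup_k E_k)$ is unchanged since the perturbation affects only an internal interface. In view of~(\ref{percl}) this implies that $P(\E)$ itself varies by at most $C|t|^\beta$ for each elementary move; edges of the form $\{0,i\}$ reduce literally to~\cite{PS19} applied to $E_i$. Writing $\eps=\sum_k t_k v_k$ in the basis of elementary moves (whence $\sum_k|t_k|\le C|\eps|$) and superposing the $m$ modifications in their disjoint balls then produces a cluster $\E'$ with $|\E'|=|\E|+\eps$ and $P(\E')-P(\E)\le \sum_k C|t_k|^\beta\le \Cper|\eps|^\beta$, which is the claimed $\eps-\eps^\beta$ property. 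The case $\alpha=0$ with $\Cper$ arbitrarily small (Remark~\ref{remCsmall}) is then obtained by letting the radii $r_{ij}$ shrink and invoking the corresponding refinement of~\cite{PS19} in each ball.

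The main technical obstacle I expect lies in the simultaneous selection of the good points and radii. Whereas in~\cite{CP,PS19} a single good point on $\partial^*E$ is enough, here one such point must be chosen on \emph{each} interface picked by the spanning tree, with all neighborhoods mutually disjoint and each capturing only one interface; this calls for a refinement of the density arguments at points of the reduced boundary of a cluster, based on the fact that $\H^{N-1}$--a.e.\ point of $\partial^*E_i\cap\partial^*E_j$ has a well--defined tangent plane with exactly two neighboring chambers $E_i,E_j$. A secondary bookkeeping issue is the factor $1/2$ in~(\ref{percl}): an interface contributes twice to $\sum_i P(E_i)$ (once for each orientation of the normal) and not at all to $P(\bigcup_i E_i)$, so the single--set perimeter bound from~\cite{PS19} must be applied symmetrically in both orientations of the density $g$ in order to control the cluster perimeter.
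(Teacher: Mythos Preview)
Your reduction to the single--set result of~\cite{PS19} hinges on the claim that for a suitable radius $r_{ij}$ the ball $B(x_{ij},r_{ij})$ contains only the two chambers $E_i$ and $E_j$, so that ``$E_i$ is locally complementary to $E_j$''. This is the crux, and it is not justified. Blow--up at a point $x\in\partial^*E_i\cap\partial^*E_j$ tells you that the \emph{density} of every other chamber $E_k$ is zero and that $\H^{N-1}(\partial^*E_k\cap B(x,r))=o(r^{N-1})$, but neither asymptotic statement yields a radius at which the other chambers are \emph{exactly} absent. For a general cluster no such clean ball need exist (think of a third chamber consisting of countably many tiny components accumulating on $\partial^*E_i\cap\partial^*E_j$). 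If you nonetheless apply the construction of~\cite{PS19} to $E_i$ alone, the modified set $F_i$ may overlap some $E_k$ with $k\notin\{i,j\}$, and you no longer have a cluster; nor do you control $P(E_k)$ or the cross terms in~(\ref{percl}).

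The paper addresses precisely this point with a two--stage argument. First, the case $\alpha=0$ is proved from scratch by a construction (Lemma~\ref{lemmaa=0}) that follows the scheme of~\cite{PS19} but explicitly handles the contamination by the other chambers inside the reference cube; this is the bulk of the work and cannot be replaced by a black--box call to~\cite{PS19}. Second, for $\alpha>0$, the $\alpha=0$ case (now available with arbitrarily small constant, since $g$ is continuous) is combined with an Infiltration Lemma (Lemma~\ref{InfilLem}): either one can build a competitor of the same volume and strictly smaller perimeter (and then the $\eps-\eps^\beta$ property is trivial), or a small ball around $x_{ij}$ really is contained in $E_i\cup E_j$. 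Only in this latter situation does your reduction to~\cite{PS19} go through, via the symmetrised density $\tilde g(y,\nu)=\tfrac12\big(g(y,\nu)+g(y,-\nu)\big)$ when $i,j\neq 0$ (Lemma~\ref{only2}). Your spanning--tree bookkeeping is essentially the paper's equivalence--relation argument in Lemma~\ref{solouno}; what is missing is the mechanism---Lemma~\ref{lemmaa=0} plus Lemma~\ref{InfilLem}---that either produces genuinely two--chamber balls or makes the question moot.
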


The precise study of how the value of the constant $\Cper$ can be bounded in terms of $g$ when $\alpha=0$ and $g$ is continuous is performed in Remark~\ref{bestC}. As said above, the above result is already known, with the same value of $\beta$ given by~(\ref{defbeta}), when one considers sets instead of clusters. In other words, the following result is known, and it has been proved in~\cite{CP} for the special case of the single density, i.e., $g(x,\nu)=f(x)$, and in~\cite{PS19} for the general case.

\begin{theorem}\label{ThPS}
The result of Theorem~\ref{main} is true for single sets, that is, if $m=1$.
\end{theorem}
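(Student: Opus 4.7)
The plan is the one from~\cite{CP,PS19}: construct, at a carefully chosen regular point of $\partial^*E$, an explicit family of localised modifications of $E$ whose volume and perimeter changes are controlled by two scale parameters, and then optimise those parameters to realise the exponent $\beta$ of~(\ref{defbeta}). Since Theorem~\ref{ThPS} is precisely the conclusion of the cited papers, in the paper at hand it could simply be recalled by citation; we sketch the underlying construction for the reader's convenience.

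By De~Giorgi's structure theorem, together with the local boundedness and lower semicontinuity of $f$ and $g$, we first select a ``regular'' point $x_0\in\partial^*E$: inside a small ball $B_{r_0}(x_0)$, the set $E$ is well approximated by a half-space with outer normal $\nu_E(x_0)$, and both $f$ and $g(\cdot,\nu_E(x_0))$ are essentially constant. After a rigid motion we may assume $x_0=0$, $\nu_E(x_0)=e_N$, and $E\cap B_{r_0}\approx\{x_N<0\}\cap B_{r_0}$. For parameters $0<\lambda,\mu\le r_0$, consider
\[
E^\pm_{\lambda,\mu}:=E\triangle\big\{(x',x_N):|x'|<\lambda,\ 0<\pm x_N<\mu\big\},
\]
obtained by attaching (``$+$'') or removing (``$-$'') a flat-topped cylinder of base radius $\lambda$ and height $\mu$ at $x_0$. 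A direct computation gives the volume and perimeter estimates
\[
\big||E^\pm_{\lambda,\mu}|-|E|\big|\approx f(0)\,\omega_{N-1}\,\lambda^{N-1}\mu,\qquad
\big|P(E^\pm_{\lambda,\mu})-P(E)\big|\lesssim \mu^\alpha\lambda^{N-1}+\mu\,\lambda^{N-2},
\]
the first summand in the perimeter bound being the $\alpha$-H\"older variation of $g(\cdot,e_N)$ as the top disc moves from $\{x_N=0\}$ to $\{x_N=\pm\mu\}$, and the second the weighted lateral area of the cylinder, which is controlled by the local $L^\infty$ bound on~$g$ (only l.s.c.\ information is available in the normal variable).

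Imposing the volume constraint $\mu\lambda^{N-1}\sim\eps$ reduces the perimeter bound to the single-variable expression $\eps^\alpha\lambda^{(N-1)(1-\alpha)}+\eps/\lambda$; elementary calculus yields the optimal scale $\lambda\sim\eps^{(1-\alpha)/(N-(N-1)\alpha)}$, at which both summands reach the common order $\eps^\beta$ with $\beta$ exactly as in~(\ref{defbeta}) (note that the optimum lies in the ``wide and short'' regime $\lambda\gg\mu$ for $\alpha>0$, and degenerates to the balanced regime $\lambda\sim\mu$ for $\alpha=0$, recovering the usual ball construction). Switching the sign of the cylinder handles negative target volumes, so the $\eps-\eps^\beta$ property follows. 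The main technical obstacle is the rigorous perimeter bookkeeping: one has to replace $\partial^*E\cap B_{r_0}$ by its flat tangent approximation up to lower-order errors, to control the lateral surface of the cylinder using only the local $L^\infty$ control of $g$, and to verify the H\"older-in-$x$ estimate on the displaced top disk. Finally, when $\alpha=0$ and $g$ is continuous in the first variable, the H\"older seminorm is replaced by a local modulus of continuity that vanishes as $r_0\to 0$, so the same optimisation produces $\beta=(N-1)/N$ with an arbitrarily small constant $\Cper$, establishing the remaining assertion of Theorem~\ref{main}.
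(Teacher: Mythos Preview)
Your proposal is essentially correct in the sense that matters: the paper does not prove Theorem~\ref{ThPS} at all, but simply invokes it as the main result of~\cite{CP,PS19}, and you explicitly acknowledge this. So as a ``proof'' for this paper, a bare citation suffices, and your added sketch is supplementary material.

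That said, the sketch you give is a heuristic that captures the correct two-scale optimisation and the right exponent $\beta$, but it is not the construction actually carried out in~\cite{PS19} (nor the one visible in Lemma~\ref{lemmaa=0} of the present paper, which is its cluster analogue for $\alpha=0$). Attaching or removing a solid cylinder to a set that is only \emph{approximately} a half-space does not literally produce the perimeter bookkeeping you describe: the ``bottom disc'' you remove is not a piece of $\partial^*E$, and the symmetric difference with $E$ is not the cylinder itself. The actual construction selects a small cube $Q_\eps$ of side $\ell$ in the approximate tangent hyperplane, and inside the vertical cylinder $Q_\eps\times(\sigma^-,\sigma^+)$ \emph{translates $E$ itself vertically by $\delta$}, stretching at the bottom and squeezing at the top. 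The interior contribution to the perimeter change then comes from comparing $g$ at a point and at its vertical translate (this is where the $\alpha$-H\"older bound enters), while the lateral contribution is controlled via a careful averaging argument yielding the $\H^{N-2}$ bound~(\ref{sectN-2small}). The two approaches give the same scaling $\ell^{N-1}\omega(\delta)+\ell^{N-2}\delta$ and hence the same $\beta$, but the shift construction is what makes the argument rigorous for a genuine finite-perimeter set rather than an exact half-space. Your closing remark about ``rigorous perimeter bookkeeping'' is thus understating the gap between the heuristic and the proof.
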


%------------- Adding the piece about boundedness
%
%
As already anticipated in the beginning of this introduction, the $\eps-\eps^\beta$ property is a technical fundamental tool in order to prove properties of isoperimetric sets such as boundedness and regularity. It turns out that the same holds also in the case of isoperimetric clusters for weighted volume and perimeter, with a very similar proof. More precisely, we have the following.
\begin{corol}\label{toadd}
Let $\E$ be a minimal $m$-cluster in $\R^N$, assume that $f$ and $g$ are bounded from above and below (that is, there exists $M>0$ such that for every $x\in\R^N$ and $\nu\in\S^{N-1}$ one has $M^{-1}\leq f(x),\, g(x,\nu)\leq M$, and assume one of the following holds:
\begin{itemize}
\item the cluster $\E$ satisfies the $\eps-\eps^\beta$ property with $\beta > \frac{N-1}N$ or $\beta = \frac{N-1}N$ and constant $C_{\text{per}}$ arbitrarily small;
\item the perimeter density $g$ is converging at infinity to a finite limit $a>0$ uniformly with respect to the second variable;
\end{itemize} 
Then $\E$ is bounded.
\end{corol}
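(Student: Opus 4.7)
The plan is to argue by contradiction: assume that $\E$ is unbounded. Following the truncation strategy used in the single-set case in \cite{CP,PS19}, set $v_i(R)=|E_i\setminus B_R|$ and $V(R)=\sum_{i=1}^m v_i(R)$; by finiteness of the total volume, $V(R)\to 0$ as $R\to\infty$, while the assumed unboundedness forces $V(R)>0$ for every $R$. The goal is to construct, for $R$ suitably large, a competitor $\tilde\E_R$ with the same volume vector as $\E$ but strictly smaller perimeter, contradicting minimality.

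For almost every $R$, the truncated cluster $\E_R=\{E_i\cap B_R\}_{i=1}^m$ satisfies, using~\eqref{percl} and the upper bound $g\leq M$ on the boundary newly created on $\partial B_R$,
\[
P(\E_R)\leq P(\E)-P(\E;\R^N\setminus B_R)+M\,\H^{N-1}\Big(\partial B_R\cap\bigcup\nolimits_i E_i\Big),
\]
where the last term is in turn controlled by $M^2|V'(R)|$ via the coarea formula applied to the distance from the origin (using $f\geq M^{-1}$). The Euclidean isoperimetric inequality for $\bigcup_i E_i\setminus B_R$, combined with the density bounds, yields a universal constant $c_0=c_0(N,M)>0$ with $P(\E;\R^N\setminus B_R)\geq c_0\,V(R)^{(N-1)/N}-M\,\H^{N-1}(\partial B_R\cap\bigcup_i E_i)$, so that overall
\[
P(\E_R)\leq P(\E)-c_0\,V(R)^{(N-1)/N}+2M^2|V'(R)|.
\]

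It remains to restore the volume, and this is where the two hypotheses diverge. In the first case, the $\eps-\eps^\beta$ property applied to $\E_R$ (realized by a perturbation supported in a fixed compact region where $\E_R\equiv\E$ for all $R$ large enough, hence disjoint from the truncation) produces $\tilde\E_R$ with $|\tilde\E_R|=|\E|$ and $P(\tilde\E_R)\leq P(\E_R)+\Cper V(R)^\beta$. Minimality and the previous bound yield
\[
c_0\,V(R)^{(N-1)/N}\leq \Cper V(R)^\beta+2M^2|V'(R)|,
\]
which, when $\beta>\frac{N-1}N$ (so that $\Cper V(R)^\beta$ is absorbed for $V$ small) or when $\beta=\frac{N-1}N$ with $\Cper<c_0$, reduces to a differential inequality $|V'|\geq c_1 V^{(N-1)/N}$; integration then forces $V$ to vanish at some finite radius, a contradiction. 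In the second case, the $\eps-\eps^\beta$ property is not assumed, and the volume is instead restored by adjoining $m$ disjoint small balls of prescribed masses $v_i(R)$ placed at a point so far from the origin that both $f$ and $g$ differ from their limits by at most $\eta$; the perimeter cost of such balls is, up to a factor $1+O(\eta)$, the sharp Euclidean cost with constant density $a$, which can be made strictly smaller than $c_0 V(R)^{(N-1)/N}$ for $\eta$ small and then the same ODE argument closes.

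The main technical obstacle is ensuring that the two modifications (truncation and volume readjustment) are performed in disjoint regions and that all the constants compare in the right direction: in the first case this is handled by the local nature of the $\eps-\eps^\beta$ construction behind Theorem~\ref{main}, in the second by pushing the ``added balls'' far enough at infinity. The only cluster-specific bookkeeping relative to \cite{CP,PS19} is that one must restore the volumes of all $m$ chambers simultaneously, which is straightforward since both the $\eps-\eps^\beta$ property and the placement of disjoint balls act componentwise.
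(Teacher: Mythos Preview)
Your treatment of the first bullet is essentially the paper's argument: truncate, bound the exterior perimeter from below via the isoperimetric inequality, restore the volumes via the $\eps-\eps^\beta$ property inside a fixed compact region, and derive the differential inequality $|V'|\gtrsim V^{(N-1)/N}$ that forces $V$ to vanish at a finite radius.

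The second bullet, however, has a genuine gap. The hypothesis only says that $g$ converges at infinity; nothing is assumed about $f$ beyond the global bound $M^{-1}\leq f\leq M$. So you cannot place your balls ``at a point so far from the origin that both $f$ and $g$ differ from their limits by at most $\eta$'': $f$ need not have any limit. Without control on $f$ at the placement point, a ball of weighted mass $v_i(R)$ may require Euclidean radius as large as $(Mv_i(R)/\omega_N)^{1/N}$, and hence weighted perimeter (even with $g\approx a$) as large as $a\,N\omega_N^{1/N}(Mv_i(R))^{(N-1)/N}$. On the other side, the exterior perimeter you remove is only guaranteed to exceed $(a-\eta)N\omega_N^{1/N}(V(R)/M)^{(N-1)/N}$. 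These do not compare in the right direction, and the additional loss from concavity, $\sum_i v_i^{(N-1)/N}\geq V^{(N-1)/N}$, when several chambers are unbounded makes the discrepancy worse. So the ``added balls'' construction does not close.

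The paper's route for the second bullet avoids this constant comparison entirely. Since the unbounded chambers, say $E_1,\dots,E_k$, have reduced boundary points arbitrarily far from the origin, and since the uniform convergence of $g$ makes the oscillation $\omega_{\bar x}$ arbitrarily small at such points, Lemma~\ref{lemmaa=0} yields the $\eps-\eps^{(N-1)/N}$ property \emph{with arbitrarily small constant} for volume vectors $\eps\in\R^m$ supported on the indices $1,\dots,k$. For $R$ large the truncation only affects those chambers, so this restricted $\eps-\eps^{(N-1)/N}$ property is exactly what is needed to rerun the argument of the first case.
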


The plan of the paper is very simple. In Section~\ref{sectec} below we collect some standard properties of the sets of finite perimeter, together with the formal definition of the $\eps-\eps^\beta$ property for clusters and with some notation that will be used through the paper. Then, in Section~\ref{sec:a0}, we study the preliminary case $\alpha=0$, which will be obtained by means of the fundamental Lemma~\ref{lemmaa=0}. The next step, in Section~\ref{sec:infil}, consists in presenting a version of the ``infiltration lemma'' which is valid in this context, and which follows by the case $\alpha=0$. And finally, in Section~\ref{sec:proof}, we will use the Infiltration Lemma to conclude the proof of Theorem~\ref{main} for a general $0\leq\alpha\leq 1$.

\subsection{Basic notation and properties of sets of finite perimeter and of clusters\label{sectec}}

This short section is devoted to present some notation that will be used through the paper, and some known results about sets of finite perimeter. First of all, given $m\in\N$, $m\geq 1$, we call \emph{$m$-cluster}, or shortly a \emph{cluster}, a collection $\E=(E_1,\, E_2,\, \dots\,,\,E_m)$ of essentially pairwise disjoint sets in $\R^N$. We say that $\E$ is a \emph{cluster of finite perimeter} if each $E_i$ is a set of finite perimeter (we assume a basic knowledge of the theory of sets of finite perimeter, a reader may refer to~\cite{AFP} for any needed detail). For brevity of notation, we will write
\begin{align*}
E_0=\R^N\setminus \Big( \cup_{i=1}^m E_i\Big)\,, && \partial^*\E = \cup_{i=1}^m \partial^* E_i\,.
\end{align*}

As usual, for any $x\in\R^N$ and any $r>0$ we will denote by $B(x,r)$ the open ball with center in $x$ and radius $r$. Given any set $F\in\R^N$ and any $z\in\R^{N-1}$, we denote by $F_z$ the \emph{vertical section of $F$ relative to $z$}, that is, the $1$-dimensional set $F_z=\{t\in\R:\, (z,t)\in F\}$. Analogously, for any $t\in\R$, we denote by $F^t$ the \emph{horizontal section of $F$ relative to $t$}, that is, the $(N-1)$-dimensional set $F^t= \{ z\in\R^{N-1}:\, (z,t)\in F\}$. A well-known and fundamental result about sections of sets of finite perimeter is the following one, due to Vol'pert. A proof can be found in~\cite{Volpert}, see also~\cite[Theorem~3.108]{AFP}.
\begin{theorem}[Vol'pert Theorem]\label{volpert}
Let $F$ be a set of finite perimeter. Then, for $\H^{N-1}$-a.e. $z\in\R^{N-1}$ the section $F_z$ is a set of finite perimeter in $\R$, and $(\partial F)_z=\partial (F_z)$. Analogously, for $\H^1$-a.e. $t\in\R$ the section $F^t$ is a set of finite perimeter in $\R^{N-1}$, and $(\partial^* F)^t = \partial^* (F^t)$ up to $\H^{N-2}$-negligible subsets.
\end{theorem}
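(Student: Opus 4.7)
The plan is to deduce both statements from the general BV slicing theory applied to $u=\chi_F$, which lies in $BV(\R^N)$ because $F$ has finite perimeter. I would invoke the tensorization of BV along a single coordinate direction (see e.g.~\cite[Theorem~3.103]{AFP}): for any $u\in BV(\R^N)$ and $\H^{N-1}$-a.e.\ $z\in\R^{N-1}$, the one-dimensional slice $u_z(t)=u(z,t)$ is BV on $\R$, with
$$\int_{\R^{N-1}} |Du_z|(\R)\, d\H^{N-1}(z) \,=\, |D_{x_N} u|(\R^N) \,\le\, |Du|(\R^N).$$
Applied to $u=\chi_F$, this says exactly that $F_z$ has finite perimeter in $\R$ for $\H^{N-1}$-a.e.\ $z$, with the total perimeter of the slices bounded by $P(F)$. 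The same argument with $e_j$ in place of $e_N$ for $j<N$, combined with a Fubini argument in the complementary direction, yields the corresponding statement for horizontal sections $F^t$.

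For the identification of the boundaries in the vertical case, observe that the slice $F_z$ is one-dimensional and of finite perimeter, so its boundary $\partial(F_z)$ is the discrete set of endpoints of its essentially-disjoint connected components, which coincides with the jump set of $\chi_{F_z}$. The inclusion $\partial(F_z)\subseteq (\partial F)_z$ follows from a Fubini argument on the integrals defining the essential boundary: if $(z,t)\notin\partial F$ then the density of $F$ at $(z,t)$ is $0$ or $1$, and for a.e.\ $z$ this property passes to the density of $F_z$ at $t$. The reverse inclusion is obtained by combining the blow-up characterization of reduced boundary points, which forces the local behavior to be that of a half-space, with the rectifiability of $\partial^* F$, so that for a.e.\ vertical line the intersection with $\partial^* F$ is a set of isolated jump points of $\chi_{F_z}$.

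For the horizontal statement, the key tool is the coarea formula for rectifiable sets applied to the projection $x\mapsto x_N$ restricted to $\partial^* F$: since $\partial^* F$ is countably $\H^{N-1}$-rectifiable with normal $\nu_F$,
$$\int_\R \H^{N-2}\bigl((\partial^* F)^t\bigr)\, dt \,=\, \int_{\partial^* F}\sqrt{1-(\nu_F\cdot e_N)^2}\, d\H^{N-1}(x) \,\le\, P(F),$$
so $(\partial^* F)^t$ is $\H^{N-2}$-rectifiable and of finite measure for $\H^1$-a.e.\ $t\in\R$. At every such regular level $t$, a blow-up at $\H^{N-1}$-a.e.\ point of $\partial^* F$ lying in $\{x_N=t\}$ shows that the limiting half-space in $\R^N$ intersects the hyperplane $\{x_N=t\}$ again in a half-space, and this identifies the slice $(\partial^* F)^t$ with $\partial^*(F^t)$ up to an $\H^{N-2}$-null set.

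The principal technical obstacle is the second point rather than the first: the finite perimeter of the slices is an immediate consequence of BV slicing, but the exact set-theoretic identification between the slices of the boundary and the boundary of the slices requires carefully coupling the rectifiable structure of $\partial^* F$ with pointwise density information transported by Fubini. This is precisely the core content of Vol'pert's original proof in~\cite{Volpert} and of the treatment in~\cite[Theorem~3.108]{AFP}.
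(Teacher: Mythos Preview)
The paper does not actually prove this statement: Vol'pert's Theorem is quoted as a classical result, with the proof delegated to~\cite{Volpert} and~\cite[Theorem~3.108]{AFP}. Your sketch follows essentially the same route as those references --- BV slicing via~\cite[Theorem~3.103]{AFP} for the finite-perimeter claim, and the coupling of rectifiability/blow-up with Fubini for the boundary identification --- so there is nothing to compare against within the paper itself. As a proof outline your proposal is reasonable and points to the right ingredients, though several steps (notably the ``reverse inclusion'' in the vertical case and the passage from blow-up at points of $\partial^* F$ to the identification $(\partial^* F)^t=\partial^*(F^t)$) would need to be fleshed out considerably to stand as a self-contained proof; but since the paper treats this as a black box, a full proof is not expected here.
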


Another key fact about sets of finite perimeter is the following, classical blow-up result, due to De Giorgi, whose proof can be found in~\cite[Theorem~3.59]{AFP}.

\begin{theorem}[Blow-up Theorem]\label{thblowup}
Let $F\subseteq \R^N$ be a set of finite perimeter, and let $x\in\partial^* F$, with exterior normal vector $\nu=\nu_F(x)$. For every $r>0$, define the ``blow-up set'' $F_r = \frac 1r\, (F-x)$, and define the measure $\mu_r = D \chi_{F_r}$ and the half-space $H=\{x\in\R^N:\, x\cdot \nu<0\}$. Then, the sets $F_r$ converge to $H$ in the $L^1_{\rm loc}$ sense, and the measures $\mu_r$ and $|\mu_r|$ converge respectively to $\nu \H^{N-1} \res \partial H$ and to $\H^{N-1}\res \partial H$ in the weak* sense.
\end{theorem}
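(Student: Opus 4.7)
The plan is De Giorgi's classical compactness-and-rigidity argument. First I establish compactness by combining the scaling identity
\[
|D\chi_{F_r}|(B(0,R)) = r^{1-N}\,|D\chi_F|(B(x,rR))
\]
with the standard upper density estimate $|D\chi_F|(B(x,\rho)) \leq C\rho^{N-1}$ valid near any reduced-boundary point (a consequence of the relative isoperimetric inequality). This bounds $|D\chi_{F_r}|(B(0,R))$ uniformly in $r$ for every fixed $R$; together with the trivial bound $|F_r\cap B(0,R)| \leq |B(0,R)|$, BV compactness yields, along every sequence $r_n\to 0$, a subsequence with $\chi_{F_{r_n}} \to \chi_E$ in $L^1_{\rm loc}$ for some set $E$ of locally finite perimeter.

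Next I identify $E$. The very definition of reduced boundary states that the ratio $D\chi_F(B(x,\rho))/|D\chi_F|(B(x,\rho))$ converges to a unit vector as $\rho\to 0$; by the scaling, this forces $\mu_r := D\chi_{F_r}$ to become asymptotically parallel to the constant direction determined by $\nu$ on every centered ball, so passing to the limit along the subsequence the measure-theoretic normal of $E$ equals $\nu$ at $|D\chi_E|$-a.e.\ point. A standard distributional argument, testing with smooth compactly supported fields orthogonal to $\nu$, then shows that $\chi_E$ depends only on $y\cdot\nu$, so up to null sets $E = \{y\cdot\nu < c\}$ for some $c\in\R$. The cone property, namely the fact that the blow-up family is invariant under further blow-up ($F_r\to E$ implies also $F_{\lambda r}\to E$ for every $\lambda>0$), forces $E$ to be a cone with vertex at the origin, hence $c=0$ and $E=H$. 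As every subsequential limit coincides with $H$, the full family $F_r$ converges to $H$ in $L^1_{\rm loc}$.

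The last point is the weak-* convergence of the measures $\mu_r$ and of their total variations $|\mu_r|$. The convergence $\mu_r \rightharpoonup \nu\,\H^{N-1}\res \partial H$ follows immediately from the $L^1_{\rm loc}$ convergence of characteristic functions by integrating by parts against smooth compactly supported test fields. The convergence of the total variations, $|\mu_r| \rightharpoonup \H^{N-1}\res\partial H$, is the subtler statement: lower semicontinuity of total variation gives the liminf inequality, and the matching limsup comes from the density estimate $|D\chi_F|(B(x,\rho))/(\omega_{N-1}\rho^{N-1}) \to 1$ at reduced-boundary points, which after rescaling reads $|\mu_r|(B(0,R))\to \omega_{N-1}R^{N-1}$, ruling out both loss and creation of mass in the limit.

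I expect this last step, interfacing the density estimate with the subsequential compactness analysis, to be the main technical obstacle, since the density theorem and the blow-up theorem are usually established jointly rather than in sequence; once both pieces of information are available, however, the passage from $L^1_{\rm loc}$ convergence to weak-* convergence of measures, both vectorial and in total variation, is standard.
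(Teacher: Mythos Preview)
The paper does not supply its own proof of this statement: Theorem~\ref{thblowup} is quoted as a classical result of De Giorgi, with a reference to~\cite[Theorem~3.59]{AFP}. So there is nothing in the paper to compare your argument against; your proposal is, in outline, exactly the standard compactness-and-rigidity proof found in that reference.

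One small gap is worth flagging. Your justification that the constant $c$ in $E=\{y\cdot\nu<c\}$ must vanish invokes a ``cone property'' phrased as: $F_r\to E$ implies $F_{\lambda r}\to E$ for every $\lambda>0$. As written this is circular, because at that stage you only have subsequential convergence $F_{r_n}\to E$; passing to the rescaled subsequence $\lambda r_n$ gives $F_{\lambda r_n}=\lambda^{-1}F_{r_n}\to\lambda^{-1}E$, which tells you $\lambda^{-1}E$ is \emph{another} subsequential limit, not that it equals $E$. The clean way to pin down $c=0$ is instead to use that $x\in\partial^*F$ has density $1/2$ for $F$: then $|F_{r_n}\cap B(0,1)|\to\frac12|B(0,1)|$, so $|E\cap B(0,1)|=\frac12|B(0,1)|$, forcing $c=0$. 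Once $c=0$ is established this way, every subsequential limit is $H$ and full convergence follows. With this correction your sketch matches the classical argument.
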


We conclude this section by presenting the definition of the $\eps-\eps^\beta$ property for clusters, which is the obvious extension of the analogous property for sets.

\begin{defin}[The $\eps-\eps^\beta$ property for clusters]\label{defpropeb}
Let $\E$ be a cluster of finite perimeter, and let $0<\beta\leq 1$. We say that \emph{the $\eps-\eps^\beta$ property for $\E$ holds} if there are three positive constants $\overline r,\, \bar \eps$ and $\Cper$ such that the following holds. Let $\eps\in\R^m$ be such that $|\eps|<\bar\eps$, and let $w\in\R^N$. Then, there exists another cluster $\F$ such that the symmetric difference $\E\Delta\F$ is contained in at most $m(m+1)/2+1$ balls of radius at most $\overline r$, not intersecting $B(w,\overline r)$, and such that
\begin{align}\label{defepsbeta}
\big|\F\big| = \big|\E\big| + \eps\,, && P(\F) \leq P(\E) + \Cper |\eps|^\beta\,,
\end{align}
where volume and perimeter of clusters are given by~(\ref{volcl}) and~(\ref{percl}).
\end{defin}

\begin{remark}\label{remCsmall}
While the exact value of the constant $\Cper$ is usually not so interesting, in some cases, in particular when $\alpha=0$ and so $\beta=\frac{N-1}N$, it is important to have estimates about it, and more specifically it is crucial to have $\Cper\ll 1$. As a consequence, we sometimes write $\Cper[t]$, for any $0<t\leq \bar\eps$, the best constant for which~(\ref{defepsbeta}) holds whenever $|\eps|\leq t$. In our main Theorem~\ref{main} we show that, when $\alpha=0$ and $g$ is continuous in the first variable, then $\Cper$ can be taken arbitrarily small, that is, $\lim_{t\searrow 0} \Cper[t]=0$. An accurate estimate of the rate of convergence, in terms of the modulus of continuity of $g$, will be given in Remark~\ref{bestC}.
\end{remark}

\section{First step: the case $\alpha=0$\label{sec:a0}}

In this section, we start the construction for our proof. Our first step consists in proving the special case $\alpha=0$. This is not only a particular case of our main result, in fact it will be needed in order to give the proof also for $\alpha>0$. More precisely, a crucial tool in the general proof will be an Infiltration Lemma, that we will be able to prove by using the case $\alpha=0$. First of all, we prove the basic estimate which is needed to get the $\eps-\eps^\beta$ property. The result is a generalization of~\cite[Theorem~A]{PS19} to the case of clusters, and our proof follows the same scheme of the proof there, with many complications due to the fact that we are now dealing with clusters.
\begin{defin}\label{defomegaxt}
Given a point $x\in\R^N$, we define
\[
\omega_x(t) = \sup \Big\{\big|g(y,\nu)-g(z,\nu)\big|:\, \nu\in\S^{N-1},\, y,\,z\in B(x,t) \Big\}\,,
\]
and $\omega_x = \lim_{t\searrow 0} \omega_x(t)$. Moreover, we call $M_x\geq 1$ the smallest constant such that $1/M_x\leq f(y)\leq M_x$ and $1/M_x\leq g(y,\nu)\leq M_x$ for every $y\in B(x,1)$ and every $\nu\in\S^{N-1}$.
\end{defin}

\begin{lemma}\label{lemmaa=0}
Let $\E$ be an $m$-cluster, let $i\neq j \in \{0,\,1,\, \dots\, ,\, m\}$ be two indices, and let $\bar x\in \partial^* E_i\cap \partial^* E_j$. Assume that, possibly up to swap $i$ and $j$,
\begin{align}\label{0ornot0}
\hbox{\rm either} \quad i=0,\, && \hbox{\rm or} \quad \H^{N-1}\big(\partial^* E_i \cap \partial^* E_0\big)=0\,.
\end{align}
Then, there exists $C=C(M_{\bar x},N,m)>0$ such that, for every $K> C\sqrt[N]{\omega_{\bar x}}$ and every $r>0$, there is $\bar\eps>0$ so that, for each $0<\eps<\bar\eps$, there is a cluster $\F$ satisfying
\begin{align}\label{oldone}
\E\Delta \F\comp B(\bar x,r)\,, && 
|F_n| = \left\{
\begin{array}{cc}
|E_n|+\eps &\hbox{if $n=i$}\,,\\
|E_n|-\eps &\hbox{if $n=j$}\,,\\
|E_n| &\hbox{if $n\notin \{i,j\}$}\,,
\end{array}\right.
&& P(\F) \leq P(\E) + K \eps^{\frac{N-1}N}\,.
\end{align}
\end{lemma}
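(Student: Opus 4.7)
The plan is to adapt the single-set construction of~\cite[Theorem~A]{PS19} to the cluster setting by a local modification near $\bar x$. First I would, if necessary, replace $\bar x$ by a nearby $\H^{N-1}$-density point of $\partial^* E_i \cap \partial^* E_j$; by~\eqref{0ornot0}, at $\H^{N-1}$-a.e.\ such point we may further require $\bar x \notin \partial^* E_0 \cup \bigcup_{k\notin\{i,j\}}\partial^* E_k$ and that both $E_i$ and $E_j$ admit a half-space blow-up. Since $E_i,E_j$ are essentially disjoint, the outer normals at $\bar x$ must then be opposite, and each other $E_k$ has zero density at $\bar x$. Up to translating and rotating, assume $\bar x = 0$ and $\nu_{E_i}(\bar x)=e_N$. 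By the Blow-up Theorem, for every small $\delta>0$ we can fix $r_0\in(0,r)$ so that, with $H^- = \{x_N<0\}\cap B(0,r_0)$ and $H^+ = \{x_N>0\}\cap B(0,r_0)$,
\[
|E_i \triangle H^-| + |E_j \triangle H^+| + \sum_{k\notin\{i,j\}}|E_k\cap B(0,r_0)| < \delta\, r_0^N\,.
\]

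For the modification, pick scales $\rho\ll r_0$ and $h\ll\rho$ to be optimized and set $T = B'(0,\rho)\times[0,h]$, where $B'\subset\R^{N-1}$ denotes the flat disk. Define
\[
F_i = E_i\cup(T\cap E_j)\,,\qquad F_j = E_j\setminus T\,,\qquad F_n = E_n \ \text{ for } n\notin\{i,j\}\,.
\]
Since $T\cap E_j\subseteq E_j$, these sets are essentially pairwise disjoint and $\bigcup_n F_n = \bigcup_n E_n$. The blow-up gives $|T\cap E_j| = f(\bar x)\,c_{N-1}\rho^{N-1}h + o(\rho^{N-1}h)$, so an intermediate value argument in $h$ provides the exact identity $|T\cap E_j|=\eps$ for all sufficiently small $\eps>0$, with $h\sim \eps/(f(\bar x)c_{N-1}\rho^{N-1})$. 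Because the union $\bigcup_n F_n$ is unchanged, only the sum of individual perimeters in~\eqref{percl} is affected:
\[
P(\F)-P(\E) = \tfrac{1}{2}\bigl(P(F_i)-P(E_i)\bigr)+\tfrac{1}{2}\bigl(P(F_j)-P(E_j)\bigr)\,.
\]

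In the ideal half-space situation, each of these changes amounts to removing the base disk $B'(0,\rho)\times\{0\}$ (with normal $e_N$) and adding the top disk $B'(0,\rho)\times\{h\}$ (same normal) plus the lateral cylinder $\partial B'(0,\rho)\times[0,h]$. The base and top contributions cancel up to $\omega_{\bar x}(h)\,c_{N-1}\rho^{N-1}$, since $g(\cdot,e_N)$ varies by at most $\omega_{\bar x}(h)$ between the two heights, while the lateral cylinder contributes at most $M_{\bar x}\,c_{N-2}\rho^{N-2}h$; deviations from the ideal configuration add only $o(\rho^{N-1})$ by the weak* convergence in the Blow-up Theorem and the choice of $r_0$. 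Using the volume identity $\rho^{N-1}h\sim\eps/f(\bar x)$, the perimeter change is bounded by $A\,\omega_{\bar x}(h)\,\rho^{N-1}+B\,\eps/\rho$ for constants $A,B=A,B(M_{\bar x},N)$; balancing the two terms at $\rho^N\sim\eps/\omega_{\bar x}(h)$ yields
\[
P(\F)-P(\E)\leq C(M_{\bar x},N)\,\omega_{\bar x}^{1/N}\,\eps^{(N-1)/N}\,,
\]
which gives the thesis for any $K>C\omega_{\bar x}^{1/N}$.

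The main technical obstacle compared with the single-set case of~\cite{PS19} is the possible presence of other sets $E_k$ (and, if $i\neq 0$, of $E_0$) in the ball $B(0,r_0)$, which might intersect the pancake $T$, threatening both the cluster disjointness and the volume/perimeter estimates. This is precisely what~\eqref{0ornot0} lets us rule out at the chosen $\bar x$ by forcing a clean two-region blow-up there; the dependence $C=C(M_{\bar x},N,m)$ accounts for the summed $o(\rho^{N-1})$ perimeter corrections that the $m$ other regions contribute inside $B(0,r_0)$, as well as for the $M_{\bar x}$-weighted volume and perimeter bounds on $f$ and $g$.
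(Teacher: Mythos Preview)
Your proposal has a genuine gap in the perimeter estimate, and it cannot be repaired within your framework.

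You assert that, in the ideal half-space picture, the modification ``removes the base disk and adds the top disk'', so that the net cost is $\omega_{\bar x}(h)\rho^{N-1}$ plus the lateral term, and that deviations from the ideal picture contribute only $o(\rho^{N-1})$. Both claims fail. First, the actual interface $\partial^* E_i\cap\partial^* E_j$ lies \emph{near} $\{x_N=0\}$ in the weak sense of the Blow-up Theorem, but there is no reason for it to sit inside the thin slab $T=B'(0,\rho)\times[0,h]$. Your set $F_i=E_i\cup(T\cap E_j)$ removes only the portion of the interface that happens to fall in $T$, while the base $B'(0,\rho)\times\{0\}$ becomes brand-new boundary of $\F$ wherever $E_j$ lies at $x_N=0^-$. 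These effects are of order $\rho^{N-1}$, not $o(\rho^{N-1})$: if, say, the true interface sits uniformly at height $-h$ over the disk, you remove nothing and add a full base plus a full top.

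Second, and decisively, suppose you patch this by extending $T$ downward so that the whole old interface is absorbed into $F_i$. The added top disk then carries the \emph{fixed} normal $e_N$, whereas the removed interface carries the \emph{actual} normals $\nu_{E_i}(x)$, which are only close to $e_N$ in measure. The perimeter difference now involves $g(x,\nu_{E_i}(x))-g(y,e_N)$, and the part coming from the change in the direction variable is not controlled by $\omega_{\bar x}$, which by Definition~\ref{defomegaxt} measures only the oscillation of $g$ in the \emph{spatial} variable. Since the hypotheses give no continuity of $g$ in $\nu$, this contribution need not vanish as $\rho\to 0$, and the bound $P(\F)-P(\E)\leq C\,\omega_{\bar x}^{1/N}\eps^{(N-1)/N}$ is out of reach.

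The paper avoids both difficulties by a different mechanism: rather than replacing the interface by a flat disk, it \emph{vertically translates} $E_i$ by a small amount $\delta$ inside a carefully selected cylinder (Step~VI). This yields $\nu_{F_i}(x',y)=\nu_{E_i}(x',y-\delta)$ a.e.\ (equation~\eqref{samenormal}), so the interior perimeter change is an integral of $\varphi_\F(x',y+\delta)-\varphi_\E(x',y)$ over the \emph{same} surface with the \emph{same} normals, hence bounded purely by the spatial oscillation $\omega_{\bar x}(a)$. The remaining chambers $E_n$, $n\notin\{i,j\}$---which you attempt to dismiss by relocating $\bar x$, something the statement does not permit---are handled directly in Steps~I--II via a dichotomy: either their boundary in the reference cube is already negligible (so the two-phase picture is valid), or one can strictly decrease $P(\E)$ by absorbing them into a neighbouring chamber, after which the lemma follows trivially.
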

\begin{proof}
For simplicity of notations, we assume that $\bar x=0$ and that $\nu_{E_i}(\bar x)=(0,1)\in \R^{N-1}\times \R$. Notice that, since $\bar x\in\partial^*E_i\cap\partial^* E_j$, then necessarily $\nu_{E_j}(\bar x)=(0,-1)$. Since $\bar x$ is fixed, we write for brevity $M$ in place of $M_{\bar x}$. We let $C=C(M, N,m)$ be a large constant, to be precised later, we fix $K>C \sqrt[N]{\omega_{\bar x}}$ and $r>0$, and we let $\bar \eps=\bar \eps(M,N,m,K,r)>0$ be a small constant, also to be found later. We divide the proof in a few steps for clarity.

\step{I}{Choice of the reference cube and reduction to the validity of~(\ref{wecanassume}).}
We start by introducing a small parameter $\rho=\rho(M,N,m,K)$, whose value will be precised later. For a small $a<r$, we call $Q=(-a/2,a/2)^{N-1}\subseteq\R^{N-1}$ and $Q_N=(-a/2,a/2)^N\subseteq\R^N$. Denoting the generic point of $\R^N=\R^{N-1}\times\R$ as $x=(x',x_N)$, the blow-up Theorem~\ref{thblowup} readily ensures that, if $a$ is small enough, 
\begin{gather}
1-\rho \leq \frac{\H^{N-1}(\partial^* E_i \cap Q^N \cap \{-a\rho < x_N < a\rho\})}{a^{N-1}} \leq 1+\rho\,, \label{eq:1} \\
1-\rho \leq \frac{\H^{N-1}(\partial^* E_j \cap Q^N \cap \{-a\rho < x_N < a\rho\})}{a^{N-1}} \leq 1+\rho\,, \label{eq:1bis} \\
\H^{N-1}\big((\partial^* E_i\cup\partial^* E_j) \cap Q^N \setminus \{-a\rho < x_N < a\rho\}\big) \leq \rho \,a^{N-1}\,, \label{eq:2} \\
\H^N\Big( \Big(\big(\{x_N<0\}\setminus E_i\big)\cup \big(\{ x_N>0\}\setminus E_j\big)\Big) \cap Q^N\Big) < \rho^3 \,a^N\,.\label{eq:3}
\end{gather}
As a consequence, a simple integration argument ensures that, possibly replacing $a$ with some $a/2\leq \tilde a\leq a$, we can also assume that
%\begin{align}\label{eq:4}
%\frac{\H^{N-2}\Big(\partial^* E_i \cap \partial Q^N\Big)}{2(N-1)a^{N-2}}\leq 1+C_1\rho\,, &&
%\frac{\H^{N-2}\Big(\partial^* E_j\cap \partial Q^N\Big)}{2(N-1)a^{N-2}}\leq 1+C_1\rho\,,
%\end{align}
%as well as
\begin{equation}\label{eq:5}
\H^{N-1}\big(E_n\cap \partial Q^N\big)\leq C_1\rho^3 a^{N-1}\qquad \forall\, n\notin \{i,\,j\} \,,
\end{equation}
where $C_1$ is a geometric constant, only depending on $N$, whose exact value, though elementary to calculate, is not needed in the following. Let now $n\notin \{0,\,i,\,j\}$, and let us assume that
\begin{equation}\label{assstep1}
\H^{N-1}(\partial^* E_n\cap Q^N) \geq \frac{\rho}{3mM^2}\, a^{N-1}\,.
\end{equation}
Let us also call $\Omega=E_n\cap Q^N$ for brevity. Since $\H^{N-1}$-a.e. point of $\partial^* E_n$ also belongs to $\partial^* E_\ell$ for some $\ell\neq n$, then one of the following two cases occur,
\begin{gather}
\H^{N-1}(\partial^* E_n \cap\partial^* E_0\cap Q^N) \geq \bigg(1-\frac 1{3M^2}\bigg) \, \H^{N-1}(\partial^* E_n\cap Q^N) \,,\tag{\ref{assstep1}a}\label{case1}\\
\H^{N-1}(\partial^* E_n\cap\partial^* E_\ell \cap Q^N) \geq \frac 1{3m M^2} \, \H^{N-1}(\partial^* E_n\cap Q^N)\quad \hbox{for some $\ell\notin \{0,\, n\}$}\,.\tag{\ref{assstep1}b}\label{case2}
\end{gather}
In the first case, we can define a modified cluster $\E'$ by putting $E_k'=E_k$ for every $k\notin \{0,\,n\}$, while $E_n'=E_n\setminus \Omega$ and $E_0'=E_0\cup \Omega$. It is simple to compare $P(\E)$ and $P(\E')$, indeed the only difference is that points of $\partial^* \Omega\cap \partial^* E_0$ belong to $\partial^*\E$ and not to $\partial^*\E'$, while points of $\partial^* \Omega\setminus \partial^* E_0$ belong to $\partial^*\E'$ and they might not belong to $\partial^*\E$, or belong to $\partial^*\E$ but with a different cost if $g$ is not symmetric. By~(\ref{case1}), (\ref{eq:5}) and~(\ref{assstep1}) we have then
\[\begin{split}
P(\E)-P(\E') &\geq \frac 1M\, \H^{N-1}(\partial^* \Omega\cap \partial^* E_0) - M \H^{N-1}(\partial^* \Omega \setminus \partial^* E_0)\\
&\geq \frac 1M\,\bigg(1-\frac 1{3M^2}\bigg) \, \H^{N-1}(\partial^* E_n\cap Q^N)\\
&\qquad - M \bigg(\frac 1{3M^2}\, \H^{N-1}(\partial^* E_n\cap Q^N)+ \H^{N-1}(E_n \cap \partial Q^N)\bigg)>\frac \rho{10mM^3}\, a^{N-1}\,,
\end{split}\]
as soon as $\rho$ is small enough. Since $\big||\E|-|\E'|\big| \leq 2 M \H^N(\Omega)<2M\rho^3 a^N$ by~(\ref{eq:3}) then, up to possibly further decrease $\rho$, so to get $\rho^3\ll \rho$, we can easily define another cluster $\E''$ with $\E''\Delta \E \comp B(x,r)$, $|\E''|=|\E|$, and $P(\E'')< P(\E)$. And then, the validity of the lemma in this case is obvious for a sufficiently small $\bar\eps$, even with the last property of~(\ref{oldone}) replaced by $P(\F)<P(\E)$. Thus, the proof is already concluded if~(\ref{case1}) occurs.\par
A similar argument can be done if~(\ref{case2}) occurs. Indeed, this time we define $\E'$ by putting $E_k'=E_k$ for every $k\notin\{\ell,\, n\}$, while $E_n'=E_n\setminus \Omega$ and $E_\ell'=E_\ell\cup \Omega$. Also this time we can easily compare $P(\E)$ with $P(\E')$. Indeed, points of $\partial^* \Omega\cap Q^N\setminus \partial^* E_\ell$ belong both to $\partial^*\E$ and to $\partial^*\E'$, and the relative cost is the same, since both $n,\, \ell\neq 0$. Instead, points of $\partial \Omega^* \cap \partial^* E_\ell$ belong to $\partial^*\E$ and not to $\partial^*\E'$. Therefore, also using~(\ref{case2}), (\ref{eq:5}) and~(\ref{assstep1}), again up to possibly decrease $\rho$ we have
\[\begin{split}
P(\E)-P(\E') &\geq \frac 1M\, \H^{N-1}(\partial^* \Omega\cap \partial^* E_\ell) - M \H^{N-1}(E_n \cap \partial Q^N)\\
&\geq \frac 1{3mM^3} \, \H^{N-1}(\partial^* E_n\cap Q^N)- M \H^{N-1}(E_n \cap \partial Q^N)>\frac \rho{10m^2M^5}\, a^{N-1}\,,
\end{split}\]
so we concude exactly as before. Summarizing, we have already obtained the conclusion both if~(\ref{case1}) holds, and if~(\ref{case2}) holds, and then also if~(\ref{assstep1}) holds. As a consequence, from now on we can assume that for no $n\notin\{0,\,i,\,i\}$ the property~(\ref{assstep1}) holds, that is,
\begin{equation}\label{wecanassume}
\H^{N-1}(\partial^* E_n\cap Q^N) <\frac{\rho}{3mM^2}\, a^{N-1}\qquad \forall\, n\notin \{0,\,i,\,j\} \,.
\end{equation}

\step{II}{Reduction to the validity of~(\ref{reallyassume})}
The goal of this step is to show that~(\ref{wecanassume}) can be extended also to the case $n=0$, if $0\notin \{i,\,j\}$. More precisely, we reduce to the case
\begin{equation}\label{reallyassume}
\H^{N-1}(\partial^* E_n\cap Q^N) <\rho a^{N-1}\qquad \forall\, n\notin \{i,\,j\} \,.
\end{equation}
Notice that an even stronger inequality is already given by~(\ref{wecanassume}) if $n\neq 0$, and there is nothing to prove if $0\in \{i,\,j\}$. We have then only to get the validity of~(\ref{reallyassume}) assuming that $n=0$ and that $i,\, j\neq 0$. By~(\ref{0ornot0}) and using~(\ref{wecanassume}) we can then write
\begin{equation}\label{stp2}\begin{split}
\H^{N-1}(\partial^* E_0\cap Q^N) &= \sum_{k\neq 0,\, i} \H^{N-1}(\partial^* E_0\cap \partial^* E_k \cap Q^N)\\
&\leq \frac{\rho}{3M^2}\, a^{N-1} + \H^{N-1}(\partial^* E_0\cap \partial^* E_j \cap Q^N)\,.
\end{split}\end{equation}
We call now $\Omega=E_0\cap Q^N$, so the above estimate and~(\ref{eq:5}) give
\[\begin{split}
\H^{N-1}(\partial^*\Omega) &\leq \H^{N-1}(E_0\cap \partial Q^N)+\H^{N-1}(\partial^* E_0 \cap Q^N)\\
&\leq \bigg( C_1 \rho^2 + \frac 1{3M^2}\bigg)\rho a^{N-1}+ \H^{N-1}(\partial^* E_0\cap \partial^* E_j \cap Q^N)\,.
\end{split}\]
Next, we define the modified cluster $\E'$ by putting $E_j'=E_j\cup \Omega$, $E_0'=E_0\setminus \Omega$, and $E_k'=E_k$ for every $k\notin \{0,\, j\}$. We have then
\[\begin{split}
P(\E)-P(\E') &\geq \frac {\H^{N-1}(\partial^* E_0\cap \partial^* E_j \cap Q^N)}M - M \Big( \H^{N-1}(\partial^*\Omega)-\H^{N-1}(\partial^* E_0\cap \partial^* E_j \cap Q^N)\Big)\\
&\geq \frac{\H^{N-1}(\partial^* E_0\cap \partial^* E_j \cap Q^N)}M\, - M\bigg( C_1 \rho^2 + \frac 1{3M^2}\bigg)\rho a^{N-1}\,.
\end{split}\]
Arguing as in the previous step, the proof is easily concluded if $P(\E)-P(\E')\geq \rho a^{N-1}/4M$, hence we reduce ourselves to consider the opposite case. That is, we can assume that
\[
\H^{N-1}(\partial^* E_0\cap \partial^* E_j \cap Q^N) \leq M^2\bigg( C_1 \rho^2 + \frac 1{3M^2}\bigg)\rho a^{N-1} + \frac 14\,\rho a^{N-1}\,.
\]
And by~(\ref{stp2}), this concludes the validity of~(\ref{reallyassume}) with $n=0$ as soon as $\rho$ is small enough.

\step{III}{The ``good'' part $G$ and the validity of~(\ref{GtuttoQ}) and~(\ref{perGtuttoQ}).}
In the first steps, we have selected a small cube $Q^N$ for which properties (\ref{eq:1})--(\ref{eq:5}) and~(\ref{reallyassume}) hold. Notice that, basically, these properties are quantifying through a small parameter $\rho$ the fact that the cluster $\E$ in the cube $Q^N$ is close to be given by $E_i$ in the lower half of the cube and $E_j$ in the upper half, and correspondingly $\partial^*\E$ in $Q^N$ is close to be given by the $(N-1)$-dimensional cube $Q$, as common boundary of $E_i$ and $E_j$. As a consequence we might expect that, for a generic $x'\in Q$, the vertical segment $\{x'\}\times (-a/2,a/2)$ should be contained in $E_i$ for the lower half and in $E_j$ for the upper half, having no intersection with the other sets $E_n$. We define then $G\subseteq Q$ the set of the points $x'\in Q$ for which this is more or less true. More precisely, for every $x'\in Q$ and every $0\leq n \leq m$ we set for brevity $E_{n,x'}=(E_n)_{x'} \cap (-a/2,a/2)$ and $\partial^* E_{n,x'}=\partial^*\big((E_n)_{x'}\big)\cap (-a/2,a/2)$. The ``good'' set $G$ is then defined as the set of those $x'\in Q$ such that
\begin{enumerate}[i)]
\item $\partial^*\big( (E_n)_{x'}\big) = \big(\partial^* (E_n)\big)_{x'}$ for every $0\leq n\leq m$;
\item $\partial^* E_{n,x'}=\emptyset$ for every $n\notin \{i,\,j\}$;
\item $\# \Big(\partial^* E_{i,x'}\Big)=\# \Big(\partial^* E_{j,x'}\Big)=1$;
\item $(-a/2,-a\rho)\subseteq E_{i,x'}\subseteq (-a/2,a\rho)$.
\end{enumerate}
We can now prove that $G$ contains most of the interesting information, that is, $Q\setminus G$ is a small portion of $G$, and its sections carry only a small portion of $\partial^*\E\cap Q^N$. Precisely, we claim that
\begin{gather}
\H^{N-1}(Q \setminus G) \leq (m+4)\rho a^{N-1}\,, \label{GtuttoQ}\\
\H^{N-1}\Big(\partial^*\E \cap \big( (Q\setminus G )\times (-a/2,a/2) \big)\Big)\leq (3m+11)\rho a^{N-1}\label{perGtuttoQ}\,.
\end{gather}
To obtain these estimates, let us call $\Gamma_1$ the set of points $x'\in Q$ such that the above property~i) fails, and similarly we call $\Gamma_2,\, \Gamma_3$ and $\Gamma_4$ the set of points such that the properties~ii), iii) and~iv) respectively fail. Vol'pert Theorem~\ref{volpert} ensures that
\begin{equation}\label{Gam1}
\H^{N-1}(\Gamma_1)=0\,,
\end{equation}
while~(\ref{reallyassume}) gives that
\begin{equation}\label{Gam2}
\H^{N-1}(\Gamma_2) < (m-1) \rho a^{N-1}\,,
\end{equation}
since for every $0\leq n \leq m$ we have
\[
\H^{N-1}(\partial^* E_n \cap Q^N) \geq \H^{N-1}\Big(\big\{ x'\in G:\, \partial^* E_{n,x'} \neq \emptyset\big\}\Big)\,.
\]
Let us now consider $\Gamma_3$, and let us write $\Gamma_3=\Gamma_3^{i,0}\cup \Gamma_3^{i,2}\cup \Gamma_3^{j,0}\cup \Gamma_3^{j,2}$, where $x'$ belongs to $\Gamma_3^{i,0}$ if $\# \Big(\partial^* E_{i,x'}\Big)=0$, and to $\Gamma_3^{i,2}$ if $\# \Big(\partial^* E_{i,x'}\Big)\geq 2$, and $\Gamma_3^{j,0}$ and $\Gamma_3^{j,2}$ are defined in the same way. If $x'\in \Gamma_3^{i,0}$, then either the whole section $\{x'\}\times (-a/2,a/2)$ is contained in $E_i$, or it does not intersect $E_i$, thus
\[
\H^1 \bigg(\Big( \big(\{x'\} \times \{x_N<0\}\big) \setminus E_i\Big) \cup \Big(\big(\{x'\} \times \{x_N>0\}\big) \setminus E_j \Big) \bigg)\geq a/2\,,
\]
and the same estimate is true if $x'\in \Gamma_3^{j,0}$. Then, by~(\ref{eq:3}),
\begin{equation}\label{Gam3part}
\H^{N-1}\big(\Gamma_3^{i,0}\cup\Gamma_3^{j,0} \big) < 2\rho^3 \,a^{N-1}\,.
\end{equation}
Observe now that
\[\begin{split}
\H^{N-1}\big(\partial^* E_i \cap Q^N\big) &\geq 2\H^{N-1}( \Gamma_3^{i,2}) + \H^{N-1}\big(Q \setminus (\Gamma_3^{i,0}\cup \Gamma_3^{i,2})\big)\\
&\geq a^{N-1} - \H^{N-1}(\Gamma_3^{i,0})+\H^{N-1}(\Gamma_3^{i,2})
\geq (1-2\rho^3) a^{N-1}+\H^{N-1}(\Gamma_3^{i,2})\,.
\end{split}\]
On the other hand, by~(\ref{eq:1}) and~(\ref{eq:2}) we have that
\begin{equation}\label{onlyonce}
\H^{N-1}\big(\partial^* E_i \cap Q^N\big) \leq (1+2\rho) a^{N-1}\,,
\end{equation}
which inserted in the above estimate gives
\[
\H^{N-1}(\Gamma_3^{i,2}) \leq 2(\rho+\rho^3) a^{N-1}\,.
\]
Since the same estimate holds for $\Gamma_3^{j,2}$, by~(\ref{Gam3part}) we deduce
\begin{equation}\label{Gam3}
\H^{N-1}(\Gamma_3) < (4 \rho+ 6\rho^3) a^{N-1}\,.
\end{equation}
Let us finally consider a point $x'\in \Gamma_4\setminus (\Gamma_1\cup\Gamma_2\cup\Gamma_3)$. The fact that $x'\notin (\Gamma_1\cup\Gamma_2\cup\Gamma_3)$ ensures that the segment $\{x'\}\times (-a/2,a/2)$ is the union of two segments, one contained in $E_i$ and the other contained in $E_j$. Then, since $x'\in\Gamma_4$, we have
\[
\H^1 \bigg(\Big( \big(\{x'\} \times \{x_N<0\}\big) \setminus E_i\Big) \cup \Big(\big(\{x'\} \times \{x_N>0\}\big) \setminus E_j \Big) \bigg)\geq \rho a\,,
\]
so again by~(\ref{eq:3}) we deduce
\[
\H^{N-1}\big(\Gamma_4\setminus (\Gamma_1\cup\Gamma_2\cup\Gamma_3) \big) < \rho^2 \,a^{N-1}\,.
\]
Putting this estimate together with~(\ref{Gam1}), (\ref{Gam2}) and~(\ref{Gam3}), and up to possibly decrease $\rho$, we deduce the validity of~(\ref{GtuttoQ}).\par

The estimate~(\ref{perGtuttoQ}) is then an easy consequence. Indeed, by~(\ref{onlyonce}) and~(\ref{GtuttoQ}), we have
\[\begin{split}
(1+2\rho) a^{N-1} &\geq \H^{N-1}\big(\partial^* E_i \cap Q^N\big) \\
&\geq \H^{N-1}\big(\partial^* E_i \cap (G\times (-a/2,a/2))\big) +\H^{N-1}\Big(\partial^* E_i \cap \big((Q\setminus G)\times (-a/2,a/2)\big)\Big) \\
&\geq \H^{N-1}(G)+\H^{N-1}\Big(\partial^* E_i \cap \big((Q\setminus G)\times (-a/2,a/2)\big)\Big) \\
&\geq \big(1- (m+4)\rho\big)a^{N-1}+\H^{N-1}\Big(\partial^* E_i \cap \big((Q\setminus G)\times (-a/2,a/2)\big)\Big) \,,
\end{split}\]
which implies
\[
\H^{N-1}\Big(\partial^* E_i \cap \big((Q\setminus G)\times (-a/2,a/2)\big)\Big) \leq (m+6)\rho a^{N-1}\,.
\]
Since the same estimate clearly holds with $j$ in place of $i$, keeping in mind also~(\ref{reallyassume}) we obtain~(\ref{perGtuttoQ}).
\step{IV}{Selection of the small cubes $Q_h\subseteq Q$.}
In this step we select some ``good'' cubes $Q_h$ in $Q$. We start by letting $L=L(M,N,m,K)$ be a large constant, to be specified later, and by calling for brevity $\ell = L \eps^{1/N}$, where $0<\eps<\bar\eps$ is any constant. Keep in mind that $L$ depends only on $M,\,N,\,m$ and $K$, as well as $a$ (in fact, $a$ depends also on $\rho$, but $\rho=\rho(M,\,N,\,m,\,K)$). As a consequence, up to select $\bar\eps\ll 1$, only depending on $M,\,N,\,m$ and $K$, we can assume that $\ell \ll a$, and we will use this assumption several times later.

Keep in mind that our goal is to find a cluster $\F$ satisfying~(\ref{oldone}). Now, for every $c\in Q$, we denote by $Q_\ell(c)$ (resp., $Q_{2\ell}(c)$) the $(N-1)$-dimensional cube in $\R^{N-1}$ with center $c$, side $\ell$ (resp., $2\ell$), and with sides parallel to the coordinate planes. We can now observe that, for any set $\Theta\subseteq\R^N$,
\begin{equation}\label{fattobene}\begin{split}
\int_{Q_\ell(c)} \H^{N-2} &\Big(\Theta \cap \big(\partial Q_\ell(x')\times (-a/2,a/2)\big)\Big) \, d\H^{N-1}(x')\\
&\leq (N-1)\ell^{N-2} \H^{N-1}\Big(\Theta \cap\big( Q_{2\ell}(c)\times (-a/2,a/2)\big)\Big)\,.
\end{split}\end{equation}
To show this inequality, for every direction $1\leq n\leq N-1$ and every $x'\in Q_\ell(c)$ we define
\[
S_n^\pm(x') = \big\{ y \in Q_{2\ell}(c):\, y_n = x_n' \pm \ell/2\big\}\,.
\]
Then, we observe that
\[\begin{split}
\int_{Q_\ell(c)} \H^{N-2}&\Big(\Theta \cap \big(S_n^-(x')\times (-a/2,a/2)\big)\Big)\,d\H^{N-1}(x')\\
&=\ell^{N-2} \int_{-\ell}^{0} \H^{N-2}\Big(\Theta \cap \big(Q_{2\ell}(c)\times (-a/2,a/2)\big)\cap \{x_n = c_n+t\}\Big) \, dt\\
&\leq \ell^{N-2}\H^{N-1}\Big(\Theta \cap \big(Q_{2\ell}(c)\times (-a/2,a/2)\big)\cap \Big\{c_n-\ell<x_n<c_n \,\Big\}\Big)\,.
\end{split}\]
Since the same estimate is clearly valid if we substitute $S_n^+$ to $S_n^-$, and correspondingly the interval $(c_n,c_n + \ell)$ to the interval $(c_n - \ell ,c_n)$, and since for every $x'\in Q_\ell(c)$ the boundary $\partial Q_\ell(x')$ is contained in the union of the sets $S_n^\pm$ with all $1\leq n\leq N-1$, adding the above estimates we get~(\ref{fattobene}).\par

Let us now take $2H$ points $\{\tilde x_1',\, \tilde x_2',\, \dots\,,\, \tilde x_{2H}'\}$ in $Q$ so that the open cubes $Q_{2\ell}(\tilde x_h')$ are disjoint and compactly contained in $Q$. Since $\ell\ll a$, this is clearly possible with some
\begin{equation}\label{estiH}
H\geq \frac{a^{N-1}}{2^{N+1}\ell^{N-1}}\,.
\end{equation}
We now recall that, by~(\ref{eq:1}), (\ref{eq:1bis}), (\ref{eq:2}) and~(\ref{reallyassume}), and keeping in mind that $\H^{N-1}$-a.e. point of $\partial^*\E$ belongs to exactly two different boundaries $\partial^* E_i$ with $0\leq i \leq m$, the estimate
\begin{equation}\label{easypar*E}
\H^{N-1}(\partial^*\E\cap Q^N) \leq \bigg(1 + \frac{\rho (m+2)}2\bigg) a^{N-1} \leq 2 a ^{N-1}
\end{equation}
holds. As a consequence, at most $H$ of the disjoint cubes $Q_{2\ell}(\tilde x_h')$ can satisfy
\[
\H^{N-1}\Big(\partial^*\E \cap\big( Q_{2\ell}(\tilde x_h')\times (-a/2,a/2)\big)\Big)\geq \frac{2 a^{N-1}}H\,.
\]
Up to renumbering, then, we can assume that all the $H$ cubes $Q_{2\ell}(\tilde x_h')$ with $1\leq h\leq H$ satisfy the opposite inequality, which by~(\ref{estiH}) becomes
\[
\H^{N-1}\Big(\partial^*\E \cap\big( Q_{2\ell}(\tilde x_h')\times (-a/2,a/2)\big)\Big)\leq \frac{2 a^{N-1}}H 
\leq 2^{N+2} \ell^{N-1}\,.
\]
Finally, applying~(\ref{fattobene}), for any $1\leq h\leq H$ we can select a point $x_h' \in Q_\ell(\tilde x_h')$ such that
\[\begin{split}
\H^{N-2} \Big(\partial^*\E &\cap \big(\partial Q_\ell(x_h')\times (-a/2,a/2)\big)\Big)\\
&\leq \frac 1{\ell^{N-1}} \int_{Q_\ell(\tilde x_h')} \H^{N-2} \Big(\partial^*\E \cap \big(\partial Q_\ell(x')\times (-a/2,a/2)\big)\Big) \, d\H^{N-1}(x')\\
&\leq \frac{N-1}\ell\, \H^{N-1}\Big(\partial^*\E \cap\big( Q_{2\ell}(\tilde x_h')\times (-a/2,a/2)\big)\Big)
\leq 2^{N+2} (N-1) \ell^{N-2}\,.
\end{split}\]
In other words, calling for brevity $Q_h = Q_\ell(x_h')$ we know that 
\begin{equation}\label{sectN-2small}
\H^{N-2}\Big(\partial^*\E \cap \big(\partial Q_h\times (-a/2,a/2)\big)\Big)\leq 2^{N+2} (N-1) \ell^{N-2} \qquad \forall\, 1\leq h\leq H\,.
\end{equation}

\step{V}{Selection of an horizontal cube $Q_\eps$.}
Our next aim is to select one of the cubes defined in the previous step. More precisely, we will write $Q_\eps=Q_j$ and $Q_\eps^N=Q_j\times (-a/2,a/2)$ for a suitable $1\leq j \leq H$, in such a way that
\begin{gather}
\frac{\H^{N-1}\big(\partial^*\E \cap Q^N_\eps\big)}{\ell^{N-1}} \leq 1+5\cdot 2^{N+1}(m+3) \rho\,,\label{eq:10new}\\
\frac{\H^{N-1}\Big(\Big(\partial^*\E \setminus \{-a\rho < x_N < a\rho\} \bigcup \cup_{h\notin \{i,\,j\}} \partial^* E_h \Big)\cap Q_\eps^N\Big)}{\ell^{N-1}} \leq 3\cdot 2^{N+1} m \rho\,,\label{eq:11new}\\
\frac{\H^{N-1}(Q_\eps\setminus G)}{\ell^{N-1}} \leq 3\cdot 2^{N+1}(m+4)\rho\,.\label{eq:13new}
\end{gather}
Let us start by considering the last estimate. There are at most $H/3$ of the cubes $Q_j$ for which
\[
\H^{N-1}(Q_j \setminus G) \geq \frac 3H\, \H^{N-1}(Q\setminus G) \,,
\]
hence using~(\ref{estiH}) and~(\ref{GtuttoQ}) we get that for at least $2/3$ of the $H$ cubes $Q_j$ the estimate~(\ref{eq:13new}) is valid with $Q_j$ in place of $Q_\eps$. The argument to obtain estimate~(\ref{eq:11new}) is the same, we only have to observe that, calling for brevity
\[
\Theta = \partial^*\E \setminus \{-a\rho < x_N < a\rho\} \bigcup \cup_{h\notin \{i,\,j\}} \partial^* E_h\,,
\]
by~(\ref{eq:2}) and~(\ref{reallyassume}) we have
\[
\H^{N-1}(\Theta\cap Q^N)\leq m \rho a^{N-1}\,,
\]
and then again for at least $2/3$ of the $H$ cubes $Q_j$ we must have
\[
\H^{N-1}\Big(\Theta\cap \big(Q_j\times(-a/2,a/2)\big)\Big) \leq \frac 3 H \, \H^{N-1}(\Theta\cap Q^N)\,,
\]
which by~(\ref{estiH}) reduces to the validity of~(\ref{eq:11new}) with $Q_j\times (-a/2,a/2)$ in place of $Q_\eps^N$.

Finally, to get~(\ref{eq:10new}) we call $A$ the projection over $Q$ of $\partial^*\E\cap Q^N$, and we notice that
\[
\H^{N-1}(\partial^*\E\cap Q^N) - \H^{N-1}(A) \geq \sum_{1\leq j\leq H} \H^{N-1}\Big(\partial^*\E\cap \big(Q_j\times(-a/2,a/2)\big)\Big) - \H^{N-1}(A\cap Q_j)\,.
\]
On the other hand, observing that by construction $A\supseteq G$, from~(\ref{easypar*E}) and~(\ref{GtuttoQ}) we get
\[
\H^{N-1}(\partial^*\E\cap Q^N)-\H^{N-1}(A) \leq \bigg(1 + \frac{\rho (m+2)}2\bigg) a^{N-1}- \H^{N-1}(G)
\leq \rho a^{N-1}\bigg( \frac{3m+10}2\bigg)\,,
\]
so we deduce that for at most $H/3$ of the cubes $Q_j$ one may have
\[
\H^{N-1}\Big(\partial^*\E\cap \big(Q_j\times(-a/2,a/2)\big)\Big) - \H^{N-1}(A\cap Q_j) \geq \frac{(9m+30)\rho a^{N-1}}{2H}\,.
\]
Once again, also by~(\ref{estiH}) we deduce that for at least $2/3$ of the $H$ cubes $Q_j$ one must have
\[\begin{split}
\H^{N-1}\Big(\partial^*\E\cap \big(Q_j\times(-a/2,a/2)\big)\Big) &\leq \H^{N-1}(A\cap Q_j) + 2^N (9m+30)\rho \ell^{N-1}\\
&\leq \big(1 + 2^N (9m+30)\rho\big) \ell^{N-1}\,,
\end{split}\]
which is stronger than~(\ref{eq:10new}) with $Q_j\times(-a/2,a/2)$ in place of $Q_\eps^N$. Putting everything together, we can finally select a suitable $1\leq j\leq H$ so that, calling $Q_\eps=Q_j$ and $Q_\eps^N=Q_j\times (-a/2,a/2)$, the estimates~(\ref{eq:10new}), (\ref{eq:11new}) and~(\ref{eq:13new}) hold. We conclude this step by noticing that, thanks to~(\ref{eq:10new}) and~(\ref{eq:13new}), we have
\begin{equation}\label{uselater}\begin{split}
\H^{N-1}\Big(\partial^*\E &\cap\big((Q_\eps\setminus G)\times (-a/2,a/2)\big)\Big)\\
&=\H^{N-1}\big(\partial^*\E \cap Q_\eps^N\big)-\H^{N-1}\Big(\partial^*\E \cap\big((Q_\eps\cap G)\times (-a/2,a/2)\big)\Big)\\
&\leq \H^{N-1}\big(\partial^*\E \cap Q_\eps^N\big)-\H^{N-1}\big(Q_\eps\cap G\big)
\leq 2^{N+4}(m+4) \rho \ell^{N-1}\,.
\end{split}\end{equation}

\step{VI}{Definition of the modified clusters $\F_\delta$.}
In this step we define the clusters $\F_\delta$, which are slight modifications of the cluster $\E$. One of them will then be selected as the searched cluster $\F$. To start, we define
\[
\bar\delta = \frac{2 M \eps}{\ell^{N-1}}
\]
and we set
\[
K:= 2\left \lfloor \frac a{6\bar \delta} \right \rfloor\,.
\]
Notice that $K\gg 1$ up to select $\bar\eps\ll 1$, depending on $M,\,N,\,m$ and $K$. We can then select $K$ constants $a\rho < \sigma_1 < \sigma_2 < \,\cdots\, < \sigma_K < \frac a2 - \bar\delta$ in such a way that the strips $\SS_k = Q_\eps \times (\sigma_k,\, \sigma_k + \bar\delta)$ are pairwise disjoint, being $Q_\eps$ the horizontal cube defined in Step~V. Thanks to~(\ref{eq:11new}), we have then
\[
\sum_{k=1}^K \H^{N-1}\big(\partial^*\E \cap \overline{\SS_k}\big)\leq \H^{N-1} \Big(\partial^*\E \cap \big(Q_\eps \times (a\rho,a/2)\big)\Big)\\
\leq 3\cdot 2^{N+1} m \rho \ell^{N-1}\,.
\]
We can then fix one of these strips, call it $\SS^+:= \SS_{\overline k}$, in such a way that
\begin{equation}\label{fixsp}
\H^{N-1}\big(\partial^*\E \cap \overline{\SS^+}\big) \leq
\frac{3\cdot 2^{N+1} m\rho\ell^{N-1}}K
\leq \frac{5\cdot 2^{N+2} m\rho\ell^{N-1}\bar\delta}a\,.
\end{equation}
We also call for brevity $\sigma^+ = \sigma_{\overline k}$.\par

We need now to define another constant $-a/2<\sigma^-<-a\rho$. To do so, we observe that by~(\ref{eq:11new}) and by Vol'pert Theorem
\[
3\cdot 2^{N+1} m \rho \ell^{N-1} \geq \H^{N-1}\Big(\partial^*\E\cap \big( Q_\eps^N\times (-a/2,-a\rho)\big)\Big)
\geq \int_{t=-a/2}^{-a\rho} \H^{N-2}\Big(\big(\partial^*\E\cap Q_\eps^N\big)^t\Big)\,dt\,.
\]
As a consequence, we can find some $\sigma^-\in (-a/2, -a\rho)$ such that Vol'pert Theorem holds for $(E_n)^{\sigma^-}$ for each $0\leq n\leq m$, and so that
\begin{equation}\label{nitt}
\H^{N-2}\Big(\big(\partial^*\E\cap Q_\eps^N\big)^{\sigma^-}\Big) \leq \frac{2^{N+4} m \rho \ell^{N-1}}a\,.
\end{equation}

We are now in position to define the modified cluster $\F_\delta$ for any given $0<\delta<\bar\delta$. For every $h\notin \{i,\,j\}$, we simply set $F_{h,\delta}=E_h$, and we also set $F_{i,\delta}=E_i$ and $F_{j,\delta}=E_j$ outside of the cylinder $\C=Q_\eps\times (\sigma^-,\sigma^++\delta)$. Within the cylinder, instead, we let
\begin{align}\label{defFijdelta}
F_{i,\delta} \cap \C = S \cap (E_i\cup E_j)\,, && F_{j,\delta} \cap\C = (\C\setminus S)\cap (E_i\cup E_j)\,,
\end{align}
where the set $S\subseteq \C$ is given by
\[
S=\big(Q_\eps \cap E_i^{\sigma^-} \big)\times (\sigma^-,\sigma^-+\delta) \cup \Big\{ (x',x_N+\delta):\, (x',x_N) \in Q_\eps \times (\sigma^-,\sigma^+) \cap E_i\Big\}\,.
\]
Notice that the set $F_{i,\delta}$ in the cylinder has been basically defined by ``stretching'' of height $\delta$ the section $E_i^{\sigma^-}\cap Q_\eps$, by ``translating'' of an height $\delta$ the part of $E_i$ in $Q_\eps\times (\sigma^-,\sigma^+)$, and then by ``squeezing'' the part of $E_i$ in $Q_\eps\times (\sigma^+,\sigma^++\delta)$.

\step{VII}{Volume estimate and selection of the competitor $\F$.}
In this step we show that there exists some $\bar\delta/(4M^2)<\delta<\bar\delta$ such that the cluster $\F=\F_\delta$ satisfies $|F_i|=|E_i|+\eps$ and $|F_j|=|E_j|-\eps$. Let us fix a point $x'\in G\cap Q_\eps$. Then, by definition the whole section $\{x'\}\times (-a/2,a/2)$ is contained in $E_i\cup E_j$, and more precisely $\{x'\}\times(-a/2,y) \subseteq E_i$ and $\{x'\}\times (y,a/2)\subseteq E_j$ for some $-a\rho<y<a\rho$. As a consequence,
\begin{equation}\label{ifinG}
\H^1\Big(\big(\{x'\}\times (-a/2,a/2)\big)\cap (S\Delta E_i)\Big)=\H^1\Big(\big(\{x'\}\times (-a/2,a/2)\big)\cap (S\cap E_j)\Big) = \delta\,.
\end{equation}
Let instead $x' \in Q_\eps \setminus G$, and let $\sigma^- <y<\sigma^++\delta$ be such that the segment $\{x'\}\times (y-\delta,y)$ does not intersect $\partial^*\E$. Then, the whole segment belongs to a same set $E_h$, and then either $(x',y)\in E_i\cap S$, or $(x',y)\notin E_i\cup S$. Therefore, by~(\ref{ifinG}) and~(\ref{uselater}), on one hand we have
\[\begin{split}
\big|F_{i,\delta}\big|-|E_i|&\leq\delta M\H^{N-1}(G\cap Q_\eps)+\delta M \H^{N-1}\Big(\partial^*\E \cap \big((Q_\eps\setminus G)\times (\sigma^-,\sigma^+)\big)\Big)\\
&\leq\delta M \ell^{N-1} \big(1 + 2^{N+4}(m+4) \rho\big)\,,
\end{split}\]
and on the other hand also by~(\ref{eq:13new}) we have
\[\begin{split}
\big|F_{i,\delta}\big|-|E_i|&\geq \frac\delta M\,\H^{N-1}(G\cap Q_\eps)-\delta M \H^{N-1}\Big(\partial^*\E \cap \big((Q_\eps\setminus G)\times (\sigma^-,\sigma^+)\big)\Big)\\
&\geq \frac \delta M\, \ell^{N-1} \big( 1 - 2^{N+5}(m+4)M^2\rho\big)\,.
\end{split}\]
As a consequence, as soon as $\rho\ll 1$ we have
\begin{align*}
\big|F_{i,\bar\delta}\big|-|E_i| >\eps\,, && \Big|F_{i,\frac{\bar\delta}{4M^2}}\Big|-|E_i| <\eps\,,
\end{align*}
and then we can by continuity there is some $\bar\delta/(4M^2)<\delta<\bar\delta$ such that $|F_{i,\delta}|=|E_i|+\eps$. From now on, we will call $\F=\F_\delta$ with such a choice of $\delta$. Notice that, by construction and since $a<r$, the cluster $\F$ satisfies the first two requirements of~(\ref{oldone}), thus to conclude the thesis we only have to check the last requirement.

\step{VIII}{Perimeter estimate.}
This last step is devoted to obtain a perimeter estimate for the cluster $\F$. First of all we notice that, since Vol'pert Theorem holds for each $(E_n)^{\sigma^-}$, so in particular $\H^{N-1}$-a.e. point of $Q_\eps\times\{\sigma^-\}$ has density either $0$ or $1$ for each of the sets $E_n$, then
\begin{equation}\label{bottom}
\H^{N-1}\Big(\partial^*\F\cap \big(Q_\eps\times \{\sigma^-\}\big)\Big)=\H^{N-1}\Big(\partial^*\E\cap \big(Q_\eps\times \{\sigma^-\}\big)\Big)=0\,.
\end{equation}
Second, we observe that the perimeter of $\F$ can be written as
\[
P(\F) = \int_{\partial^*\F} \varphi_\F(x)\, d\H^{N-1}(x)
\]
for a suitable function $\varphi_\F:\partial^*\F\to \R^+$. More precisely, $\H^{N-1}$-almost any $x\in\partial^*\F$ belongs to the boundary of exactly two of the sets $F_h$ with $0\leq h_1<h_2\leq m$, and the corresponding outer normals are $\nu=\nu_{F_{h_1}}(x)$ and $-\nu=\nu_{F_{h_2}}(x)$ for a vector $\nu=\nu(x)\in\S^{N-1}$. The value of $\varphi_\F(x)$ is then either $g(x,-\nu)$ or $(g(x,\nu)+g(x,-\nu))/2$, respectively if $h_1=0$ and if $h_1>0$. Of course, the very same can be said for the perimeter of $\E$, obtaining another function $\varphi_\E:\partial^*\E\to\R^+$. Now, keep in mind that $F_h=E_h$ for every $h\notin\{i,\,j\}$, and call
\[
\Gamma = \cup_{h\notin \{i,\,j\} } \partial^* F_h \cap \overline\C = \cup_{h\notin \{i,\,j\} } \partial^* E_h \cap \overline\C\,.
\]
Since $\F\equiv \E$ outside of the closed cylinder $\overline\C$, we have
\[
P(\F) - P(\E) = \int_{\partial^*\F\setminus \Gamma} \varphi_\F(x)- \int_{\partial^*\E\setminus \Gamma} \varphi_\E(x)+\int_\Gamma \varphi_\F(x)-\varphi_\E(x)\,.
\]
Now, notice that the last term does not necessarily cancel, since $\varphi_\F$ and $\varphi_E$ might not coincide on $\Gamma$. Indeed, a point $x\in\Gamma$ might belong to $\partial^* E_h\cap\partial^* E_i\cap \partial^* F_j$, or to $\partial^* E_h\cap\partial^* E_j\cap \partial^* F_i$, and this causes a different value of $\varphi_\F(x)$ and $\varphi_\E(x)$ if $i=0$. However, since both $\varphi_\E$ and $\varphi_\F$ are bounded between $1/M$ and $M$ in $\overline\C$, the above estimate together with~(\ref{eq:11new}) gives
\begin{equation}\label{onlyij}
P(\F)-P(\E) -\bigg( \int_{\partial^*\F\setminus \Gamma} \varphi_\F(x)- \int_{\partial^*\E\setminus \Gamma} \varphi_\E(x) \bigg)\leq M \H^{N-1}(\Gamma) 
\leq 3\cdot 2^{N+1} M m \rho\ell^{N-1}\,.
\end{equation}
We have then only to concentrate ourselves on points of $\overline\C$ belonging to $\partial^*\F\setminus\Gamma$, which are then necessarily contained in $\partial^* F_i\cap \partial^* F_j$. Let then $x=(x',y)\in \C\cap(\partial^*\F\setminus\Gamma)$, and let us first assume that $\sigma^- < y\leq \sigma^-+\delta$. By the definition~(\ref{defFijdelta}) of $F_i$ and $F_j$, and by the fact that $x\in \partial^* F_i\cap\partial^* F_j$, we deduce that necessarily $(x',\sigma^-)\in \partial^* E_i\cap\partial^* E_j$. Therefore, also by~(\ref{nitt})
\begin{equation}\label{midbottom}\begin{split}
\int_{(Q_\eps\times(\sigma^-,\sigma^-+\delta])\cap (\partial^*\F\setminus\Gamma)} &\varphi_\F(x) \leq M \H^{N-1}\Big((Q_\eps\times(\sigma^-,\sigma^-+\delta])\cap (\partial^*\F\setminus\Gamma)\Big)\\
&\hspace{-20pt}\leq M\delta \H^{N-2}\Big(\big(\partial^* E_i\cap\partial^* E_j\big)\cap \big(Q_\eps\times\{\sigma^-\}\big)\Big)
\leq \frac{2^{N+4} mM\delta \rho \ell^{N-1}}a\,.
\end{split}\end{equation}
Now, consider a point $x=(x',\sigma^++\delta)\in\C\cap(\partial^*\F\setminus\Gamma)$. By construction and by Vol'pert Theorem again, there must be some $\sigma^+\leq y\leq \sigma^++\delta$ such that $(x',y)\in\partial^*\E$. As a consequence, by~(\ref{fixsp})
\begin{equation}\label{top}\begin{split}
\int_{(Q_\eps\times \{\sigma^++\delta\})\cap (\partial^*\F\setminus\Gamma)} \varphi_\F(x) &\leq M \H^{N-1} \Big(\partial^*\E \cap \big(Q_\eps\times [\sigma^+,\sigma^++\delta]\big)\Big)\,,\\
&\leq \frac{5\cdot 2^{N+2} m M \rho\ell^{N-1}\bar\delta}a\,.
\end{split}\end{equation}
The next stage is to consider a point $x=(x',y)\in (\C\cap \partial^*\F\setminus\Gamma)\cap \big(Q_\eps\times(\sigma^-+\delta,\sigma^++\delta) \big)$. As noticed above, we have $x\in\partial^* F_i\cap \partial^* F_j$. Moreover, by construction we also have $(x',y-\delta)\in \partial^* E_i\cap\partial^* E_j$. We claim that the equality
\begin{equation}\label{samenormal}
\nu_{F_i}(x',y) = \nu_{E_i}(x',y-\delta)
\end{equation}
holds, up to neglect a $\H^{N-1}$-negligible subset of $(\C\cap \partial^*\F\setminus\Gamma)\cap \big(Q_\eps\times(\sigma^-+\delta,\sigma^++\delta) \big)$. Indeed, let us call for a moment $F_i^+ = S \supseteq F_i$. In the cylinder $Q_\eps\times(\sigma^-+\delta,\sigma^++\delta)$, the set $F_i^+$ is nothing else than a vertical translation by a quantity $\delta$ of the set $E_i$, and then since $(x',y-\delta)\in \partial^* E_i$ we deduce that $(x',y)\in \partial^* F_i^+$ with $\nu_{F_i^+}(x',y) = \nu_{E_i}(x',y-\delta)$. Since $F_i^+\supseteq F_i$ but $x\in \partial^* F_i\cap \partial^* F_i^+$, we have also that $\nu_{F_i}(x) = \nu_{F_i^+}(x)$, and then~(\ref{samenormal}) is proved. As a consequence, keeping in mind Definition~\ref{defomegaxt} and~(\ref{eq:10new}) we can evaluate
\begin{equation}\label{interior}\begin{split}
\int_{(\partial^*\F\setminus\Gamma)\cap (Q_\eps\times (\sigma^-+\delta,\sigma^++\delta))} &\varphi_\F(x)\\
&\hspace{-80pt}\leq \int_{(\partial^* E_i\cap\partial^* E_j)\cap (Q_\eps\times (\sigma^-,\sigma^+))} \varphi_\E(x) + \omega_{\bar x}\bigg(\frac{a\sqrt N}2\bigg) \H^{N-1}\big(\partial^*\E\cap \C\big)\\
&\hspace{-80pt}\leq \int_{(\partial^* E_i\cap\partial^* E_j)\cap (Q_\eps\times (\sigma^-,\sigma^+))} \varphi_\E(x) + \omega_{\bar x}\bigg(\frac{a\sqrt N}2\bigg) 
\big(1+5\cdot 2^{N+1}(m+3) \rho\big)\ell^{N-1}\,.
\end{split}\end{equation}
The last set of points that we have to consider are the points on $\partial Q_\eps\times (\sigma^-,\sigma^++\delta) \cap \partial^*\F\setminus\Gamma$. For any such $(x',y)$, there must be by construction some point $(x',z)\in\partial^*\E$ with $(y-\delta)\vee \sigma^-\leq z\leq y$. As a consequence, by~(\ref{sectN-2small}) we get
\begin{equation}\label{lateral}\begin{split}
\H^{N-1}\big((\partial^*\F\setminus\Gamma)\cap (\partial Q_\eps\times (\sigma^-,\sigma^++\delta))\big) &\leq \delta \H^{N-1}\big(\partial^*\E\cap (\partial Q_\eps\times (-a/2,a/2))\big)\\
&\leq 2^{N+2} (N-1) \ell^{N-2}\delta\,.
\end{split}\end{equation}
We can finally put together the estimates~(\ref{bottom}), (\ref{onlyij}), (\ref{midbottom}), (\ref{top}), (\ref{interior}) and~(\ref{lateral}), recalling that $\delta<\bar\delta=2 M \eps/\ell^{N-1}$, and writing for brevity $\bar\omega$ in place of $\omega_{\bar x}(a\sqrt N/2)$, readily obtaining
\[
P(\F)-P(\E)\leq 2^{N+3} M m \rho\ell^{N-1}+\frac{2^{N+7} m M^2 \rho\eps}a+\frac{2^{N+3} N M^2 \eps}\ell+\bar\omega\big(1+5\cdot 2^{N+1}(m+3) \rho\big)\ell^{N-1}\,.
\]
Now, since $\ell = L \eps^{1/N}$, this inequality can be rewritten as
\[
\frac{P(\F)-P(\E)}{\eps^{\frac{N-1}N}} \leq 2^{N+3}Mm L^{N-1}\rho + \frac{2^{N+7}mM^2\rho\eps^{1/N}}a + \frac{2^{N+3}NM^2}L + \bar\omega\big(1+5\cdot 2^{N+1}(m+3) \rho\big) L^{N-1}\,.
\]
To avoid distinguishing the cases whether or not $\omega_{\bar x}=0$, where $\omega_{\bar x}=\lim_{t\searrow 0} \omega_{\bar x}(t)$ is the constant given by Definition~\ref{defomegaxt}, we write for brevity
\begin{equation}\label{defho}
\hat\omega:= \left\{\begin{array}{cc}
\omega_{\bar x} &\hbox{if $\omega_{\bar x}>0$}\,,\\[5pt]
\bal\frac {K^N}{(2^{N+3}NM^2+6)^N}\eal &\hbox{if $\omega_{\bar x}=0$}\,,
\end{array}\right.
\end{equation}
so that in any case $\hat\omega>0$ and $\hat\omega\geq \omega_{\bar x}$. We need now to choose the constants $L$ and $\rho$, only depending on $M,\,N,\,m$ and $K$, as well as the constants $a$ and $\bar\eps$, depending also on $r$. The first constant to be chosen is $L$, even though its value will be specified in few lines. Once chosen $L$, we can take $\rho$ so small that
\begin{align}\label{bestrho}
2^{N+3} M m L^{N-1} \rho < \sqrt[N]{\hat\omega}\,, && 5\cdot 2^{N+1}(m+3) \rho <1\,.
\end{align}
With this choice of $\rho$, the above inequality gives now
\[
\frac{P(\F)-P(\E)}{\eps^{\frac{N-1}N}} \leq \sqrt[N]{\hat\omega} + \frac{2^{N+7}mM^2\rho\eps^{1/N}}a + \frac{2^{N+3}NM^2}L + 2 \bar\omega L^{N-1}\,.
\]
Having set $\rho$, we can select $a\ll 1$ as precised in Step~I, with the additional requirement that
\[
\bar\omega=\omega_{\bar x}\bigg(\frac{a\sqrt N}2\bigg) < 2\hat\omega\,,
\]
which is possible since $2\hat\omega > \omega_{\bar x}$. Keep in mind that $a$ depends also on $r$, since in Step~I we required in particular that $a<r$. Therefore, the inequality is now
\[
\frac{P(\F)-P(\E)}{\eps^{\frac{N-1}N}} \leq \sqrt[N]{\hat\omega} + \frac{2^{N+7}mM^2\rho\eps^{1/N}}a + \frac{2^{N+3}NM^2}L + 4\hat\omega L^{N-1}\,.
\]
Since $L,\,\rho$ and $a$ are fixed, we can now take $\bar\eps\ll 1$ in such a way that
\begin{equation}\label{qualeps}
\frac{2^{N+7}mM^2\rho\bar\eps^{1/N}}a < \sqrt[N]{\hat\omega}\,,
\end{equation}
so that the estimate becomes
\[
\frac{P(\F)-P(\E)}{\eps^{\frac{N-1}N}} \leq 2\sqrt[N]{\hat\omega} + \frac{2^{N+3}NM^2}L + 4\hat\omega L^{N-1}\,.
\]
Of course, $\bar\eps=\bar\eps(M,\,N,\,m,\,K,\,r)$. Finally, a quick look at the inequality ensures that the best choice for $L$, up to a multiplicative constant, is given by
\begin{equation}\label{bestL}
L=\hat\omega^{-\frac 1N}\,.
\end{equation}
This gives to our estimate the final form
\[
\frac{P(\F)-P(\E)}{\eps^{\frac{N-1}N}} \leq \big(2^{N+3}NM^2+6\big) \sqrt[N]{\hat\omega}\,.
\]
We can finally define the constant $C=C(M_{\bar x},N,m)$ of the claim as $C=2^{N+3}NM^2+6$. In this way, the last estimate in~(\ref{oldone}) has now been established. Indeed, if $\omega_{\bar x}>0$ then $\hat\omega=\omega_{\bar x}$, and
\[
P(\F)-P(\E) \leq C \sqrt[N]{\omega_{\bar x}} \eps^{\frac{N-1}N} \leq K \eps^{\frac{N-1}N}\,,
\]
since $K> C \sqrt[N]{\omega_{\bar x}}$ by assumption. Observe that the value of $K$ does not actually play any role in this first case, since one could also simply take $K=C\sqrt[N]{\omega_{\bar x}}$. Instead, if $\omega_{\bar x}=0$, then by the definition~(\ref{defho}) of $\hat\omega$ we have
\[
P(\F)-P(\E) \leq C \sqrt[N]{\hat\omega} \eps^{\frac{N-1}N} = K \eps^{\frac{N-1}N}\,.
\]
Observe that, in this second case (which is often the most useful), the choice of $K$ has an effect, in particular if $K$ becomes extremely small then the same must happen to $\bar\eps$.\par

The proof is then concluded.
\end{proof}

\begin{remark}\label{eps<0}
Notice that, in the claim of Lemma~\ref{lemmaa=0}, we have considered $\eps>0$ just for simplicity of notations. But of course, since the role of $i$ and $j$ and be exchanged, the claim actually holds for every $-\bar\eps<\eps<\bar\eps$. With a non necessarily positive $\eps$, the third property of~(\ref{oldone}) has clearly to be written as $P(\F)\leq P(\E)+ K |\eps|^{\frac{N-1}N}$.
\end{remark}

Having proved Lemma~\ref{lemmaa=0}, it is rather easy to modify a cluster in such a way that only one of the volumes $|E_n|$ changes. More precisely, we have the following result.
\begin{lemma}\label{solouno}
Let $\E$ be an $m$-cluster, and let $A\subseteq\R^N$ be a bounded, open set so that for any $0\leq i<j\leq m$ one has that $\H^{N-1}(\partial^* E_i\cap \partial^* E_j)>0 \Longleftrightarrow \H^{N-1}(\partial^* E_i\cap \partial^* E_j\cap A)>0$. Then there exist an arbitrarily small constant $\overline r>0$, a positive constant $\bar \eps>0$, and $m(m+1)/2$ disjoint balls of radius at most $\overline r$ such that, for every $1\leq h \leq m$ and every $-\bar\eps<\eps<\bar\eps$, there is another cluster $\F$, which equals $\E$ outside of the union of the balls, and such that for a suitable constant $K$
\begin{align}\label{ilanat}
|F_n| = \left\{
\begin{array}{cc}
|E_n|+\eps &\hbox{if $n=h$}\,,\\
|E_n| &\hbox{if $n\neq h$}\,,
\end{array}\right.
&& P(\F) \leq P(\E) + K |\eps|^{\frac{N-1}N}\,.
\end{align}
More precisely, one can take any constant $K$ strictly bigger than $C \sqrt[N]{\omega_A}$, where $\omega_A=\sup_{x\in A} \omega_x$ and the constant $C$ only depends on $N$, on $m$ and on $M=\sup_{x\in A} M_x$.
\end{lemma}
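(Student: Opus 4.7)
The plan is to reduce Lemma~\ref{solouno} to repeated applications of Lemma~\ref{lemmaa=0} along a carefully chosen path in the adjacency graph of the cluster. Define the graph $G$ with vertices $\{0, 1, \ldots, m\}$ and an edge between $i$ and $j$ whenever $\H^{N-1}(\partial^*E_i \cap \partial^*E_j) > 0$, which by hypothesis is equivalent to $\H^{N-1}(\partial^*E_i \cap \partial^*E_j \cap A) > 0$. A standard argument based on the connectedness of $\R^N$ shows that $G$ is connected. For every edge $(i,j)$ of $G$ fix a point $\bar x_{ij} \in A \cap \partial^*E_i \cap \partial^*E_j$, and take $\bar r$ small enough that the at most $\binom{m+1}{2} = m(m+1)/2$ balls $B(\bar x_{ij}, \bar r)$ are pairwise disjoint and compactly contained in $A$. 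These are the $m(m+1)/2$ balls of the statement.

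In order to apply Lemma~\ref{lemmaa=0} on an edge $(i,j)$ one needs condition~(\ref{0ornot0}), that is, one of the two indices must be either $0$ or must satisfy $\H^{N-1}(\partial^*E_\cdot \cap \partial^*E_0) = 0$. Split $\{1, \ldots, m\}$ into $A_1 = \{k : \H^{N-1}(\partial^*E_k \cap \partial^*E_0) > 0\}$ and $A_2 = \{1, \ldots, m\} \setminus A_1$, and take a breadth-first search tree of $G$ rooted at $0$. The depth-$1$ vertices are exactly the elements of $A_1$, while every vertex of $A_2$ must appear at depth $\geq 2$ since it has no edge to $0$. Along the tree-path $h = k_0, k_1, \ldots, k_p = 0$ the final edge $(k_{p-1}, 0)$ has $0$ as an endpoint, the penultimate edge $(k_{p-2}, k_{p-1})$ (when $p \geq 2$) joins a depth-$2$ vertex of $A_2$ to its $A_1$-parent, and every earlier edge joins two vertices of $A_2$; hence condition~(\ref{0ornot0}) holds for each edge of the path.

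For given $h$ and $\eps$, for each $\ell = 1, \ldots, p$ apply Lemma~\ref{lemmaa=0} (together with Remark~\ref{eps<0}) in the ball $B(\bar x_{k_{\ell-1}k_\ell}, \bar r)$, taking the roles $i = k_{\ell-1}$ and $j = k_\ell$, to obtain a local modification in which $E_{k_{\ell-1}}$ gains volume $\eps$ and $E_{k_\ell}$ loses volume $\eps$, at a perimeter cost of at most $(K/m)|\eps|^{(N-1)/N}$. The lemma requires the threshold to exceed $C_0(M_{\bar x}, N, m)\sqrt[N]{\omega_{\bar x}}$; since $M_{\bar x_{ij}} \leq M$ and $\omega_{\bar x_{ij}} \leq \omega_A$ for every pair, it suffices to take $K > C\sqrt[N]{\omega_A}$ with $C := m\, C_0(M, N, m)$, which has the dependence required. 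Because the $\leq m$ balls used are pairwise disjoint, the local modifications combine into a single cluster $\F$ that agrees with $\E$ outside their union. The signed volume changes telescope, giving $|F_h| = |E_h| + \eps$ and $|F_n| = |E_n|$ for every $n \in \{1, \ldots, m\} \setminus \{h\}$, while summing the perimeter estimates yields $P(\F) - P(\E) \leq p(K/m)|\eps|^{(N-1)/N} \leq K|\eps|^{(N-1)/N}$; the constant $\bar\eps$ is taken as the minimum, over edges, of the thresholds furnished by Lemma~\ref{lemmaa=0}.

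The main subtle point is precisely the selection of a path from $h$ to $0$ compatible with condition~(\ref{0ornot0}); the BFS tree rooted at $0$ makes this automatic, since it forces every vertex not adjacent to $E_0$ to sit at depth at least $2$, so the second-to-last vertex of any long path is always in $A_2$. Everything else is bookkeeping, made easy by the fact that the pre-selected disjoint balls make the local modifications independent of one another.
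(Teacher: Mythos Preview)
Your proof is correct and follows essentially the same strategy as the paper: select one boundary point for each adjacent pair $(i,j)$, surround them by disjoint balls, find for each $h$ a chain from $h$ to $0$ along which condition~(\ref{0ornot0}) holds, and telescope the local modifications from Lemma~\ref{lemmaa=0}. The only real difference is how the admissible chain is produced. The paper builds~(\ref{0ornot0}) directly into the relation $\sim$ and then argues by contradiction on equivalence classes that $\sim$ connects everything; you instead take the plain adjacency graph, note it is connected, and observe that in a BFS tree rooted at $0$ every non-root vertex at depth $\geq 2$ automatically lies in $A_2$, so every tree edge satisfies~(\ref{0ornot0}). The two arguments have the same content (your BFS tree is in fact a spanning tree of the paper's $\sim$-graph), and the bookkeeping for volumes, perimeters, and the constant $K=mK_0$ with $K_0>C_0(M,N,m)\sqrt[N]{\omega_A}$ is identical.
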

\begin{proof}
In the set of indices $\{0,\, 1,\, \dots\, ,\, m\}$, we write that $i\sim j$ whenever $\H^{N-1}(\partial^* E_i\cap \partial^* E_j)>0$ and assumption~(\ref{0ornot0}) holds, up to swap $i$ and $j$, and we consider the weakest equivalence relation $\approx$ such that $i\sim j\Rightarrow i\approx j$. We claim that $i\approx j$ for every pair $i,\, j$. Indeed, let $\C$ be a non-empty equivalence class, and let $G=\cup_{i\in\C} E_i$. Up to $\H^{N-1}$-negligible subsets, points of $\partial^* G$ are the points which belong to exactly one of the boundaries $\partial^* E_i,\, i\in\C$. As a consequence, if $\partial^* G$ is not $\H^{N-1}$-negligible, then there must be some $i\in\C$ and $j\notin \C$ such that $\H^{N-1}(\partial^* E_i\cap \partial^* E_j)>0$. The fact that $i\not\approx j$, so in particular $i\not\sim j$, implies that assumption~(\ref{0ornot0}) does not hold for $i$ and $j$, not even swapping the role of the two indices. This means that $i\neq 0$ and $j\neq 0$, and moreover $\H^{N-1}(\partial^* E_i\cap \partial^* E_0)>0$ and $\H^{N-1}(\partial^* E_j\cap \partial^* E_0)>0$. In turn, this implies that $i\sim 0$ and $j\sim 0$, thus $i\approx j$ and this is a contradiction. Summarizing, we have proved that $\H^{N-1}(\partial^* G)=0$, and since $G$ is not empty we deduce that $G=\R^N$, that is, all the indices are equivalent.\par

For every two indices $i,\,j$ such that $i\sim j$, we select now a point $\bar x_{i,j}\in A\cap \partial^* E_i\cap\partial^* E_j$. We let then $\bar r$ be a length much smaller than the distance between any two of these points, and we aim to apply Lemma~\ref{lemmaa=0} to each of these points. Since the quantities $M_{\bar x_{i,j}}$ are bounded by $M$, we find a constant $C_0=C_0(M,N,m)$ which is bigger than each $C(M_{\bar x_{i,j}},N,m)$ from Lemma~\ref{lemmaa=0}. We let than $K_0$ be a constant such that, for any of the points $\bar x_{i,j}$, one has
\[
K_0 > C_0 \sqrt[N]{\omega_A}
\geq C(M_{\bar x_{i,j}},N,m) \sqrt[N]{\omega_{\bar x_{i,j}}}\,.
\]
It is then possible to apply Lemma~\ref{lemmaa=0} with constants $K_0$ and $\bar r$ to all the points $\bar x_{i,j}$, and we define $\bar\eps$ to be the minimum of the resulting constants $\bar\eps_{i,j}$. Let us now fix an index $1\leq h\leq m$ and a constant $\bar\eps<\eps<\bar\eps$. We have that $h\approx 0$, and then there is a sequence of indices $h_1 = h,\, h_2,\, h_3,\, \dots\, ,\, h_P=0$, all distinct, such that $h_l\sim h_{l+1}$ for every $1\leq l <P$. By Lemma~\ref{lemmaa=0}, and also keeping in mind Remark~\ref{eps<0} if $\eps<0$, for every $1\leq l < P$ there is a cluster $\F^l$ which equals $\E$ outside of the ball $B_l$ with radius $\bar r$ centered at $\bar x_{h_l,h_{l+1}}$, such that
\begin{align*}
|F^l_n| = \left\{
\begin{array}{cc}
|E_n|+\eps &\hbox{if $n=h_l$}\,,\\
|E_n|-\eps &\hbox{if $n=h_{l+1}$}\,,\\
|E_n| &\hbox{if $n\notin \{h_l,h_{l+1}\}$}\,,
\end{array}\right.
&& P(\F_l) \leq P(\E) + K_0 |\eps|^{\frac{N-1}N}\,.
\end{align*}
Since the balls $B_l$ are all disjoint, we define then $\F$ to be the cluster which equals $\F^l$ on each ball $B_l$, and $\E$ outside of the union of the balls. Since by construction we have $P\leq m+1$, we immediately deduce the validity of~(\ref{ilanat}) with $K=mK_0$. The thesis is then concluded with $C=mC_0$.
\end{proof}

As an immediate corollary, we obtain then the validity of Theorem~\ref{main} in the case $\alpha=0$; in fact, one needs to use at most $m(m+1)/2$ balls, while $m(m+1)/2+1$ would be acceptable by Definition~\ref{defpropeb}. The precise estimate of how the constant $\Cper[t]$ depends on $g$ is made in Remark~\ref{bestC}.

\section{Second step: the Infiltration Lemma\label{sec:infil}}

As already said in the introduction, our next step is to prove an ``Infiltration Lemma''. What we really need is to show that, if a point $x$ belongs to the reduced boundary of two different sets $E_i$ and $E_j$, then we can reduce ourselves to the case when a sufficiently small ball around $x$ only intersects $E_i$ and $E_j$ (and this is actually already the case if the cluster is minimal and the $\eps-\eps^{\frac{N-1}N}$ property with an arbitrarily small constant holds). The main difficulty to prove such a result is due to the fact that $g$ is not assumed to be symmetric in the angular variable, that is, $g(y,\nu)$ and $g(y,-\nu)$ need not to coincide. If one makes this additional assumption, then the proof can be obtained more or less with the same argument as in the book~\cite{Maggibook}, which considers the Euclidean case when $f\equiv g\equiv 1$, only with few technical complications.\par

We start with a simple observation.
\begin{lemma}\label{lemmaint}
Let $G\subseteq \R^N$ be a set of finite perimeter, let $x$ be a point of density $0$ for $G$, and let $H$ be a large constant. Then, there exists $r=r(x,G,H)\ll 1$ such that
\begin{equation}\label{rr2}
\H^{N-1}\Big(\partial B(x,r)\cap\big(G\cup \partial^* G\big)\Big) \leq \frac 1H \, \H^{N-1}\big(\partial^* (G\cap B(x,r))\big)\,.
\end{equation}
\end{lemma}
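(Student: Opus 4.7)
The plan is a classical volume--perimeter ODE argument by contradiction. I set $m(r)=|G\cap B(x,r)|$, $u(r)=\H^{N-1}(G\cap\partial B(x,r))$ and $v(r)=\H^{N-1}(\partial^*G\cap B(x,r))$; the density-$0$ hypothesis reads $m(r)/r^N\to 0$ as $r\searrow 0$. For $\H^1$-a.e.\ $r$ I would invoke three standard facts: (i) $m'(r)=u(r)$, by the coarea formula; (ii) $\H^{N-1}(\partial^*(G\cap B(x,r)))=u(r)+v(r)$, by the usual decomposition of the reduced boundary of an intersection with a ball into ``interior'' and ``spherical'' parts; (iii) $\H^{N-1}(\partial^*G\cap\partial B(x,r))=0$, which comes from the $(N-1)$-rectifiability of $\partial^*G$ and the coarea formula applied to the $1$-Lipschitz function $y\mapsto |y-x|$. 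Putting (i)--(iii) together, inequality~(\ref{rr2}) for such an $r$ is equivalent to the cleaner statement
\[
(H-1)\,u(r)\,\leq\,v(r)\,.
\]

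I would show that this last inequality holds for some arbitrarily small $r$ by contradiction. If $m$ vanishes identically on some $(0,r_0)$, then both sides of~(\ref{rr2}) are zero for a.e.\ $r\in(0,r_0)$ and there is nothing to prove, so assume $m(r)>0$ for all small $r$. Suppose $v(r)<(H-1)\,u(r)$ for a.e.\ $r\in(0,r_0)$. Combining with the Euclidean isoperimetric inequality
\[
u(r)+v(r)\,=\,P(G\cap B(x,r))\,\geq\,c_N\,m(r)^{\frac{N-1}N}\,,
\]
which is available for small $r$ because $m(r)\ll|B(x,r)|$, yields $H\,u(r)\geq c_N\,m(r)^{(N-1)/N}$. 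Substituting $u(r)=m'(r)$ turns this into the differential inequality $\big(m^{1/N}\big)'(r)\geq c_N/(NH)$. Integrating from $r_1\searrow 0$, using continuity of $m$ with $m(0)=0$, I obtain $m(r)\geq \big(c_N/(NH)\big)^N r^N$ for all small $r>0$, in direct contradiction with the density-$0$ assumption. Hence the set of $r\in(0,r_0)$ for which the desired inequality holds is not a null set, and in particular one can select an $r$ arbitrarily close to $0$.

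The argument is essentially routine, and I do not foresee any substantial obstacle. The only mildly delicate item is fact (iii), but it is a standard consequence of rectifiability; moreover, even without it one still has $\H^{N-1}(\partial B(x,r)\cap(G\cup\partial^*G))\leq u(r)+\H^{N-1}(\partial^*G\cap\partial B(x,r))$, and the right-hand side is $u(r)$ for $\H^1$-a.e.\ $r$, so nothing in the subsequent ODE step needs to change.
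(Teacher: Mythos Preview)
Your proposal is correct and follows essentially the same route as the paper's proof: both assume the failure of~(\ref{rr2}) for a.e.\ small radius, combine $m'(r)=\H^{N-1}(G\cap\partial B(x,r))$ with the isoperimetric inequality to derive $\big(m^{1/N}\big)'\geq c/H$, and integrate to contradict the density-$0$ hypothesis. The only cosmetic differences are that you decompose $\partial^*(G\cap B(x,r))$ explicitly into $u(r)+v(r)$ and integrate from $0$, whereas the paper works directly with the full perimeter and integrates over a dyadic interval $[\bar r/2,\bar r]$; neither choice affects the argument.
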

\begin{proof}
Let us call $m(t)=\H^N(G \cap B(x,t))$, and notice that the claim is emptily true if there is some $t>0$ such that $m(t)=0$. Therefore, we can assume that $m(t)>0$ for every $t>0$. Since $x$ is a point of density $0$ for $G$, there exist $\overline r\ll 1$ such that
\begin{equation}\label{choiceovr}
\H^N (G \cap B(x,\overline r))\leq \frac{\omega_N}{(4H)^N}\, \overline r^N\,.
\end{equation}
We claim that there exist some $\overline r/2 \leq r \leq \overline r$ which satisfies~(\ref{rr2}). Indeed, if our claim is false then for a.e.\ $\overline r/2 \leq t \leq \overline r$ we have
\[\begin{split}
m'(t) &= \H^{N-1}\Big(\partial B(x,t)\cap\big(G\cup \partial^* G\big)\Big) > \frac 1H \, \H^{N-1}\big(\partial^* (G\cap B(x,t)\big)\\
&\geq \frac{N\omega_N^{1/N}}H\, \H^N(G\cap B(x,t))^{\frac{N-1}N}
= \frac{N\omega_N^{1/N}}H\, m(t)^{\frac{N-1}N}\,,
\end{split}\]
which for every $\overline r/2 < t <\overline r$, since $m(t)>0$, can be rewritten as
\[
\Big(m(t)^{1/N}\Big)' > \frac{\omega_N^{1/N}}H\,. 
\]
Integrating this inequality between $\overline r/2$ and $\overline r$ and keeping in mind~(\ref{choiceovr}) we obtain that
\[
\frac{\omega_N^{1/N}}H \, \cdot\, \frac{\overline r}2 \leq m(\overline r)^{1/N} - m(\overline r/2)^{1/N}\leq \frac{\omega_N^{1/N}}{4H}\, \overline r\,,
\]
which is impossible. The proof is then concluded.
\end{proof}

\begin{remark}
Notice that, calling $m(t) = \H^N(G \cap B(x,t))$ as above, for a.e. $t$ we have
\[
m'(t) = \H^{N-1}\Big(\partial B(x,t)\cap\big(G\cup \partial^* G\big)\Big) = \H^N\big(G\cap \partial B(x,t)\big)\,,
\]
hence we could have written $\H^{N-1}(G\cap\partial B(x,r))$ in the l.h.s. of~(\ref{rr2}). We preferred to write the estimate in its less nice form just because it is more convenient in the proof of Lemma~\ref{InfilLem}.
\end{remark}

Let us now present the result that we need.

\begin{lemma}[Infiltration Lemma]\label{InfilLem}
Let $\E$ be an $m$-cluster, let $i,\, j \in \{0,\, 1,\, \dots\,,\, m\}$ be two indices, let $x\in \partial^* E_i\cap\partial^* E_j$, and let us assume that, up to possibly swap $i$ and $j$,
\begin{align}\tag{\ref{0ornot0}}
\hbox{\rm either} \quad i=0,\, && \hbox{\rm or} \quad \H^{N-1}\big(\partial^* E_i \cap \partial^* E_0\big)=0\,.
\end{align}
Then, there exist $r \ll 1$, $C=C(x,N,\E)$ and a cluster $\F$ so that
\begin{align}\label{thilvtrue}
\big|B(x,r) \setminus \big(F_i\cup F_j\big)\big|=0\,, &&\F \Delta \E \comp B(x,r) \,, && P(\F) \leq P(\E) - C \big| \F\Delta\E\big|^{\frac{N-1}N}\,.
\end{align}
\end{lemma}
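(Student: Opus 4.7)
The plan is to evacuate the auxiliary set $G:=\R^N\setminus (E_i\cup E_j)$ from a small ball $B(x,r)$ by pouring its mass into either $E_i$ or $E_j$, and then to convert the interior boundary reduction into a gain of order $v^{(N-1)/N}$, where $v:=|G\cap B(x,r)|$, via the relative isoperimetric inequality. Since $x\in\partial^* E_i\cap\partial^* E_j$, Theorem~\ref{thblowup} applied to $E_i$ and $E_j$ forces their blow-ups at $x$ to be two complementary half-spaces, hence $x$ is a point of density $0$ for $G$. Lemma~\ref{lemmaint} applied to $G$ at $x$, with a large parameter $H$ to be tuned at the end, yields a radius $r\ll 1$ for which $\H^{N-1}(\partial B(x,r)\cap (G\cup\partial^* G))\leq H^{-1}\H^{N-1}(\partial^*(G\cap B(x,r)))$; by restricting $r$ to a positive measure set of good radii we may additionally require $\H^{N-1}(\partial^*\E\cap \partial B(x,r))=0$. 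Combining this with the isoperimetric inequality in $\R^N$ and taking $H\geq 2$ gives
\[
\H^{N-1}\bigl(\partial^* G\cap B(x,r)\bigr)\ \geq\ \tfrac{c_N}{2}\, v^{(N-1)/N}.
\]

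I then build two candidate competitors $\F^i$ and $\F^j$. For $\alpha\in\{i,j\}$ and $\{\alpha,\beta\}=\{i,j\}$, I set $F^\alpha_k:=E_k\setminus B(x,r)$ for each $k\in\{1,\dots,m\}\setminus\{i,j\}$, keep $F^\alpha_\beta:=E_\beta$, and let $F^\alpha_\alpha$ agree with $E_\alpha$ outside the ball while $F^\alpha_\alpha\cap B(x,r):=B(x,r)\setminus E_\beta$ (with the standard interpretation of $F^\alpha_0$ as the derived exterior when $0\in\{i,j\}$). This yields a genuine cluster with $\F^\alpha\Delta\E\comp B(x,r)$, $F^\alpha_i\cup F^\alpha_j\supseteq B(x,r)$ modulo null sets, and $|\F^\alpha\Delta\E|\leq 2v$. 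Using the symmetric rewriting $P(\F)=\tfrac12\int_{\partial^*\F}[g(y,\nu)+g(y,-\nu)]\,d\H^{N-1}$ of formula~(\ref{percl}), one checks $\partial^*\F^\alpha\cap B(x,r)=\partial^* E_\beta\cap B(x,r)$, so that the interior saving obeys
\[
S_\alpha := P(\E;B(x,r))-P(\F^\alpha;B(x,r)) \ \geq\ \frac{1}{M_x}\,\H^{N-1}\bigl((\partial^*\E\setminus \partial^* E_\beta)\cap B(x,r)\bigr),
\]
while the only new $g$-weighted contribution to $P(\F^\alpha)$ on $\partial B(x,r)$ is bounded by $M_x\,\H^{N-1}(G\cap \partial B(x,r))$.

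The combinatorial heart of the argument is the inequality $S_i+S_j\geq M_x^{-1}\,\H^{N-1}(\partial^* G\cap B(x,r))$: indeed the union $(\partial^*\E\setminus \partial^* E_j)\cup (\partial^*\E\setminus \partial^* E_i)$ equals $\partial^*\E\setminus (\partial^* E_i\cap\partial^* E_j)$, which inside $B(x,r)$ contains $\partial^* E_i\triangle \partial^* E_j=\partial^* G\cap B(x,r)$. Taking $\F$ to be whichever of $\F^i,\F^j$ has the larger saving, one gets a gain of at least $(2M_x)^{-1}\H^{N-1}(\partial^* G\cap B(x,r))$; choosing $H$ large enough (e.g.\ $H\geq 10 M_x^2$) forces the lateral cost $M_x\H^{N-1}(G\cap \partial B(x,r))$ to stay below half of this gain, and combining with the isoperimetric lower bound produces
\[
P(\F)-P(\E)\ \leq\ -\frac{1}{4M_x}\,\H^{N-1}(\partial^* G\cap B(x,r))\ \leq\ -C\, v^{(N-1)/N}\ \leq\ -C'\, |\F\Delta\E|^{(N-1)/N},
\]
which is the desired estimate~(\ref{thilvtrue}) with $C=C(x,N,\E)$.

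The main obstacle is the lateral bookkeeping on $\partial B(x,r)$: the failure of the symmetry $g(y,\nu)=g(y,-\nu)$ forbids any naive cancellation of old and new boundary traces on the sphere, so both Lemma~\ref{lemmaint} and the averaged form of~(\ref{percl}) are essential to keep the lateral contribution strictly below the interior gain. Hypothesis~(\ref{0ornot0}) enters in the construction of the two candidates $\F^i,\F^j$ through the swap of $i$ and $j$: it guarantees that the set $E_\beta$ whose boundary is retained inside the ball does not have a substantial $\H^{N-1}$-interface with the exterior component $E_0$ near $x$, which is precisely the scenario in which the combinatorial accounting for $S_i+S_j$ would be disrupted by uncontrolled contributions coming from $\partial^* E_0$.
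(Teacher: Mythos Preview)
Your overall strategy---evacuate $G=\R^N\setminus(E_i\cup E_j)$ from a small ball using Lemma~\ref{lemmaint} and convert the interior gain into a $v^{(N-1)/N}$ bound---is the same as the paper's. But there is a genuine gap: the ``symmetric rewriting'' $P(\F)=\tfrac12\int_{\partial^*\F}[g(y,\nu)+g(y,-\nu)]\,d\H^{N-1}$ is \emph{false}. By~(\ref{percl}), at a point of $\partial^* E_0\cap\partial^* E_h$ with $h\geq 1$ the contribution to $P(\E)$ is $g(y,\nu_{E_h}(y))$, not the symmetrised value (both $P(E_h)$ and $P(\cup_\ell E_\ell)$ see the \emph{same} outer normal $\nu_{E_h}$ at such a point). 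Since the paper explicitly allows $g(y,\nu)\neq g(y,-\nu)$, this is not a harmless simplification.

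This error breaks your clean bound $S_\alpha\geq M_x^{-1}\H^{N-1}\bigl((\partial^*\E\setminus\partial^* E_\beta)\cap B\bigr)$. On the retained interface $\partial^* E_\beta\cap B(x,r)$ the weights $\varphi_\E$ and $\varphi_{\F^\alpha}$ need not coincide: for instance, when $i,j\neq 0$ and you pour into $E_i$, a point of $\Gamma_2=\partial^* I\cap\partial^* E_j$ that was an interface $\partial^* E_0\cap\partial^* E_j$ in $\E$ carries weight $g(y,\nu_{E_j})$, while in $\F^i$ it becomes an $F_i$--$F_j$ interface with weight $\tfrac12(g(y,\nu)+g(y,-\nu))$. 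There is no hypothesis bounding $\H^{N-1}(\Gamma_2\cap\partial^* E_0)$, so this mismatch can swamp your claimed $S_i$, and hence the inequality $S_i+S_j\geq M_x^{-1}\H^{N-1}(\partial^* G\cap B)$ does not follow. Your final paragraph also misstates the role of~(\ref{0ornot0}): it is a \emph{global} hypothesis about $\partial^* E_i\cap\partial^* E_0$ (for the distinguished index $i$), not a local smallness statement for $E_\beta$.

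The paper repairs exactly this by an \emph{asymmetric} case split rather than a ``take the better candidate'' argument: writing $\partial^* I=D\cup\Gamma_1\cup\Gamma_2$ with $\Gamma_1=\partial^* I\cap\partial^* E_i$, $\Gamma_2=\partial^* I\cap\partial^* E_j$, one pours into $E_i$ when $\H^{N-1}(\Gamma_1)$ dominates $\H^{N-1}(D\cup\Gamma_2)$ (so the uncontrolled weight change on $\Gamma_2$ is absorbed), and pours into $E_j$ otherwise, where~(\ref{0ornot0}) guarantees that the weight on $\Gamma_1$ is \emph{unchanged}. Your argument can be fixed along these lines, but not via the symmetric route you sketched.
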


\begin{remark}
The Infiltration Lemma is particularly interesting if $\E$ is a minimal cluster and the $\eps-\eps^{\frac{N-1}N}$ Lemma holds with a constant strictly smaller than $C$, which is the case if $g$ is continuous in the first variable (in fact, in this case $\omega_x=0$ and then the $\eps-\eps^{\frac{N-1}N}$ Lemma holds with any positive constant by Lemma~\ref{lemmaa=0}). Indeed, the validity of~(\ref{thilvtrue}) and the $\eps-\eps^{\frac{N-1}N}$ Lemma with constant smaller than $C$ immediately imply that the cluster $\F$ given by the Infiltration Lemma actually coincides with $\E$ for $r$ small enough. Therefore, we deduce that in a sufficiently small ball the minimal cluster $\E$ only intersects $E_i$ and $E_j$.
\end{remark}

\proofof{Lemma~\ref{InfilLem}}
Let us define a constant $M=M(x,\E)\geq 1$ so that $f(y), \, g(y,\nu) \in [1/M, M]$ for every $y\in B(x,1)$ and every $\nu\in\S^{N-1}$. Then, call $G=\R^N\setminus(E_i\cup E_j)$, and apply Lemma~\ref{lemmaint} to the set $G$ with constant $H=10M^4$, which is possible since by construction $x$ is a point of density $0$ for $G$. This provides us with a small $r\ll 1$ so that~(\ref{rr2}) holds. We call ``infiltration'' the set $I=B(x,r)\setminus \big(E_i\cup E_j\big)$, and we subdivide $\partial^* I = D \cup \Gamma_1 \cup \Gamma_2$, where the three essentially disjoint sets $D,\,\Gamma_1$ and $\Gamma_2$ are defined as
\begin{align*}
D = \partial^* I \cap \partial B(x,r)\,, && \Gamma_1=\big(\partial^* I \cap \partial^* E_i\big)\setminus D\,, && \Gamma_2 = \big(\partial^* I \cap \partial^* E_j\big)\setminus D\,.
\end{align*}
Observe that $I=G\cap B(x,r)$ and $\partial^* I\cap \partial B(x,r) \subseteq \partial^* G\cup G$ up to $\H^{N-1}$-negligible subsets, hence~(\ref{rr2}) implies
\[
\H^{N-1}(D)\leq \frac 1{10M^4} \, \H^{N-1}\big(\partial^* I\big)\,.
\]
As a consequence, since $M\geq 1$ one readily finds that either
\begin{equation}\label{firstcasenew}
\H^{N-1}(\Gamma_1) > 2M^2 \H^{N-1}(D\cup \Gamma_2)
\end{equation}
or
\begin{equation}\label{secondcasenew}
\H^{N-1}(\Gamma_2) > 2M^2 \H^{N-1}(D)\,.
\end{equation}
We argue separately in the two cases. First of all, we assume that~(\ref{firstcasenew}) holds, and we define the cluster $\F$ by putting $F_i=E_i\cup I$, and $F_h=E_h\setminus I$ for every $h\in \{0,\, 1,\, \dots\, ,\, m\}\setminus \{i\}$. The first two requirements of~(\ref{thilvtrue}) are clearly satisfied. Concerning the last one, we first notice that in $\R^N\setminus\partial^* I$ the perimeters of $\E$ and $\F$ coincide by construction. In addition, the set $\Gamma_1$ is contained in $\partial^*\E\setminus \partial^*\F$, while $\partial^*\F\setminus\partial^*\E$ is contained in $D$. Moreover $\Gamma_2$ is contained both in $\partial^*\E$ and $\partial^*\F$, but it may have different weights, since $g$ is not assumed to be symmetric, and it might be that a same point of $\Gamma_2$ belongs $\partial^* E_0\Delta \partial^* F_0$, so its contribution to $P(\E)$ and $P(\F)$ might be different. However, by~(\ref{firstcasenew}) we can estimate
\begin{equation}\label{ilprimo}\begin{split}
P(\E) - P(\F) &\geq \frac 1M\, \H^{N-1}(\Gamma_1)-M \H^{N-1}(D\cup \Gamma_2) >\frac 1{2M}\, \H^{N-1}(\Gamma_1)\\
&\geq \frac 1{3M}\, \H^{N-1}(\partial^* I)
\geq \frac{N\omega_N^{1/N}}{3M}\, \H^N(I)^{\frac{N-1}N}
\geq \frac{N\omega_N^{1/N}}{3M}\, \bigg(\frac 1 {2M} \, |\E\Delta \F|\bigg)^{\frac{N-1}N}\,,
\end{split}\end{equation}
so that also the third requirement in~(\ref{thilvtrue}) is satisfied in this first case.\par

Let us now suppose that, instead, (\ref{firstcasenew}) is false, while~(\ref{secondcasenew}) holds, which by a simple calculation and again minding that $M\geq 1$ readily ensures that
\begin{equation}\label{conze}
\H^{N-1}(\partial^* I) \leq 5M^2 \H^{N-1}(\Gamma_2)\,.
\end{equation}
In this case, we define $\F$ by setting $F_j=E_j\cup I$, and $F_h=E_h\setminus I$ for every $h\in \{0,\, 1,\, \dots\, ,\, m\}\setminus \{j\}$. The first two requirements of~(\ref{thilvtrue}) are again clearly satisfied. Moreover, this time the perimeters of $\E$ and $\F$ coincide not only in $\R^N\setminus \partial^* I$, but also in $\Gamma_1$. Indeed, if $i=0$ then $I\cap E_0=\emptyset$, thus by construction the points of $\Gamma_1$ belong both to $\partial^* E_0$ and to $\partial^* F_0$, so their contribution to $P(\E)$ and $P(\F)$ is the same. Instead, if $i\neq 0$, then by the assumption~(\ref{0ornot0}) the intersection between $\Gamma_1$ and $\partial^* E_0$ is $\H^{N-1}$-negligible, thus also the intersection between $\Gamma_1$ and $F_0$ since points of $\Gamma_1$ belong by construction to $\partial^* F_i\cap \partial^* F_j$. And as a consequence, also if $i\neq 0$ the contribution of $\Gamma_1$ to $P(\E)$ and $P(\F)$ is the same. Therefore, the difference between $P(\E)$ and $P(\F)$ can be evaluated by noticing that $\partial^*\F\setminus\partial^*\E\subseteq D$, while $\Gamma_2\subseteq \partial^*\E\setminus\partial^*\F$, and then by~(\ref{secondcasenew}) and~(\ref{conze}) we obtain
\[\begin{split}
P(\E)-P(\F) &\geq \frac 1M\, \H^{N-1}(\Gamma_2) - M \H^{N-1}(D) >\frac 1{2M}\, \H^{N-1}(\Gamma_2)
\geq \frac 1{10M^3}\, \H^{N-1}(\partial^* I)\,,
\end{split}\]
so we conclude the third requirement of~(\ref{thilvtrue}) arguing as in~(\ref{ilprimo}). The proof is then concluded.
\end{proof}

\section{Proof of the main result\label{sec:proof}}

This section is devoted to prove our main result, Theorem~\ref{main}. We first present a simple observation.

\begin{lemma}\label{only2}
Under the assumptions of Theorem~\ref{main}, let $i\neq j \in \{0,\, 1,\, \dots\,,\, m\}$ be two indices, and let $B(x,r)$ be a ball entirely contained in $E_i\cup E_j$. Then, there exists $C>0$ and $\bar \eps>0$ such that, for any $-\bar \eps < \eps<\bar\eps$, there exists another cluster $\F$ with
\begin{equation}\label{th2}
\begin{array}{ccc}
\F\Delta \E\comp B(x,r)\,, &\qquad& B(x,r)\subseteq F_i\cup F_j\,, \\
|F_i\cap B(x,r)|=|E_i\cap B(x,r)| + \eps\,, && P(\F)\leq P(\E) + C |\eps|^\beta\,,
\end{array}
\end{equation}
where $\beta$ is given by~(\ref{defbeta}).
\end{lemma}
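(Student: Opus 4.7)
The plan is to reduce the two-set cluster modification inside $B(x,r)$ to a single-set modification of $E_i$, making the claim essentially a consequence of Theorem~\ref{ThPS}. The key observation is that, since $B(x,r)\subseteq E_i\cup E_j$ up to negligible sets, inside $B(x,r)$ the set $E_j$ coincides with $B(x,r)\setminus E_i$; hence any modification of the $(E_i,E_j)$-interface in $B(x,r)$ is completely determined by a modification of $E_i$ alone, the complement in $B(x,r)$ of the new $F_i$ being automatically declared to belong to $F_j$. The genuinely interesting case is the one in which $\partial^* E_i\cap\partial^* E_j\cap B(x,r)$ is not $\H^{N-1}$-negligible; the degenerate situations, where $B(x,r)$ lies essentially inside one of $E_i$ or $E_j$, do not arise in the applications to Theorem~\ref{main} and will be ignored.

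Concretely, we introduce the symmetrised perimeter density
\[
\bar g(y,\nu):=\frac{g(y,\nu)+g(y,-\nu)}{2},
\]
which inherits from $g$ all the regularity properties assumed in Theorem~\ref{main} (l.s.c., locally bounded, locally $\alpha$-H\"older in the first variable), so that Theorem~\ref{ThPS} applies to $E_i$ with $\bar g$ in place of $g$. Picking a reduced boundary point $y\in\partial^* E_i\cap B(x,r)$ and invoking the localised form of the construction underlying Theorem~\ref{ThPS} (which is performed in an arbitrarily small ball around a prescribed reduced boundary point, in the same spirit as Lemma~\ref{lemmaa=0} for $\alpha=0$), one obtains, for every $|\eps|<\bar\eps$, a set $F_i'$ with $F_i'\Delta E_i\comp B(x,r)$, $|F_i'|=|E_i|+\eps$, and $\bar P(F_i')\leq\bar P(E_i)+C|\eps|^\beta$, where $\bar P$ denotes the weighted perimeter with density $\bar g$. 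The competitor cluster is then defined by $F_i:=F_i'$, $F_j:=(E_j\setminus B(x,r))\cup(B(x,r)\setminus F_i')$, and $F_k:=E_k$ for $k\notin\{i,j\}$; the first three requirements in~(\ref{th2}) are immediate consequences of this definition.

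For the perimeter estimate we use the cluster formula~(\ref{percl}). Since $F_k=E_k$ for every $k\notin\{i,j\}$ and since $F_i\cup F_j=E_i\cup E_j$ inside $B(x,r)$ by construction, one has $\cup_k F_k=\cup_k E_k$ up to null sets, hence $P(\cup_k F_k)=P(\cup_k E_k)$. A direct bookkeeping on the reduced boundaries, using that inside $B(x,r)$ one has $\nu_{F_j}=-\nu_{F_i}$ on $\partial^* F_i$ (and analogously for the pair $E_i,E_j$), yields
\[
\bigl(P(F_i)-P(E_i)\bigr)+\bigl(P(F_j)-P(E_j)\bigr)=2\bigl(\bar P(F_i')-\bar P(E_i)\bigr),
\]
so that $P(\F)-P(\E)=\bar P(F_i')-\bar P(E_i)\leq C|\eps|^\beta$, as required. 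The main obstacle is the localisation step: the abstract $\eps-\eps^\beta$ statement of Definition~\ref{defpropeb} does not by itself force the modification to stay inside a prescribed ball, so one must appeal to the explicit construction in the proof of Theorem~\ref{ThPS} from~\cite{PS19}, where the modification is produced in an arbitrarily small ball centred at a chosen reduced boundary point.
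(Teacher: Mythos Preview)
Your approach is the same as the paper's---reduce to the single-set result of Theorem~\ref{ThPS} via the symmetrised density $\bar g$---but you have overlooked the case where one of the indices is $0$, and your perimeter bookkeeping actually fails there. The union appearing in~(\ref{percl}) is $\cup_{k=1}^m E_k$, i.e.\ it excludes $E_0$. If, say, $j=0$, then inside $B(x,r)$ one has $\cup_{k=1}^m E_k=E_i$ and $\cup_{k=1}^m F_k=F_i$, so $P(\cup_k F_k)\neq P(\cup_k E_k)$, contrary to what you claim. Redoing the computation in this case gives
\[
P(\F)-P(\E)=\tfrac12\big[(P(F_i)-P(E_i))+(P(\cup_k F_k)-P(\cup_k E_k))\big]=P(F_i)-P(E_i),
\]
that is, the perimeter difference is governed by the \emph{original} density $g$, not by $\bar g$. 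Your bound $\bar P(F_i')\leq \bar P(E_i)+C|\eps|^\beta$ says nothing about $P(F_i)-P(E_i)$, so the last requirement of~(\ref{th2}) does not follow from your construction when $j=0$.

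The paper fixes this by a simple case split: if one of $i,j$ equals $0$ it applies Theorem~\ref{ThPS} directly to $E_i$ with the density $g$; only when $i,j\neq 0$ does it pass to the symmetrised density $\tilde g$, exactly because then the $(E_i,E_j)$-interface contributes $\tfrac12(g(\cdot,\nu)+g(\cdot,-\nu))$ to the cluster perimeter. Your argument is fine once you insert this distinction.
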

\begin{proof}
We can see that this is a very simple consequence of Theorem~\ref{ThPS} (that is the same as Theorem~\ref{main} with $m=1$). First of all, let us assume that $j=0$, so that the ball $B(x,r)$ only intersects $E_i$, among all the different sets $E_\ell$, $1\leq \ell \leq m$. As a consequence, the $\eps-\eps^\beta$ property for sets applied at the point $x\in \partial^* E_i$ implies the existence of two constants $\bar\eps,\, C>0$ such that, for any $-\bar\eps<\eps<\bar\eps$, there is some set $F_i$ with
\begin{align*}
F_i \Delta E_i \comp B(x,r)\,, && |F_i|=|E_i|+\eps\,, && P(F_i)\leq P(E_i)+ C |\eps|^\beta\,.
\end{align*}
Defining then the cluster $\F$ by setting $F_\ell=E_\ell$ for every $\ell \in \{1,\,2,\, \dots\, ,\, m\}\setminus \{i\}$, it is obvious that $\F$ satisfies all the requirements of~(\ref{th2}).\par

Let us instead assume that both $i,\,j\neq 0$, so that $B(x,r)$ is contained in the union $E_i\cup E_j$, and it does not intersect any $E_\ell$ with $\ell \in \{0,\, 1,\, \dots\, ,\, m\}\setminus\{i,\,j\}$. Notice that, inside $B(x,r)$, the perimeter of the cluster is given by
\[
\frac 12\,\int_{\partial^* E_i\cap B(x,r)} g(z,\nu_{E_i}(z)) \, d\H^{N-1}(z)+\frac 12\,\int_{\partial^* E_j\cap B(x,r)} g(z,\nu_{E_j}(z)) \, d\H^{N-1}(z)\,.
\]
However, $\partial^* E_i \cap B(x,r)=\partial^* E_j\cap B(x,r)$, and for $\H^{N-1}$-a.e. $z\in \partial E_i\cap B(x,r)$ one has $\nu_{E_j}(z)=-\nu_{E_i}(z)$. As a consequence, the above perimeter can be rewritten as
\[
\int_{\partial^* E_i\cap B(x,r)} \frac{g(z,\nu_{E_i}(z))+g(z,-\nu_{E_i}(z))}2 \, d\H^{N-1}(z)\,,
\]
and then inside the ball the perimeter coincides with the perimeter corresponding to the sole set $E_i$, with the auxiliary density $\tilde g(y,\nu)=\big(g(y,\nu)+g(y,-\nu)\big)/2$. Since the density $\tilde g$ satisfies the same assumptions as $g$, we can apply the $\eps-\eps^\beta$ property for sets to the set $E_i$ at the point $x$ with the density $\tilde g$. We find then two constants $\bar\eps,\, C>0$ so that, for any $-\bar\eps<\eps<\bar\eps$, there exists a set $F_i$ satisfying
\begin{align*}
F_i \Delta E_i \comp B(x,r)\,, && |F_i|=|E_i|+\eps\,, && \widetilde P(F_i)\leq \widetilde P(E_i)+ C |\eps|^\beta\,,
\end{align*}
where $\widetilde P$ is the perimeter corresponding to the density $\tilde g$. By construction, the cluster $\F$ defined by setting $F_\ell = E_\ell$ for every $\ell\in \{1,\,2,\, \dots\,,\, m\} \setminus \{i,\,j\}$ and $F_j = \big(E_j\setminus B(x,r)\big) \cup \big(B(x,r)\setminus F_i\big)$, satisfies all the requirements of~(\ref{th2}). This concludes the thesis.
\end{proof}

We are now ready to present the proof of Theorem~\ref{main}.

\proofof{Theorem~\ref{main}}
We can directly assume that $\alpha>0$ since, as already noticed at the end of Section~\ref{sec:a0}, the result for $\alpha=0$ immediately follows by Lemma~\ref{solouno} (and actually, $m(m+1)/2$ balls in Definition~\ref{defpropeb}, instead of $m(m+1)/2+1$, suffice). As already done in the proof of Lemma~\ref{solouno}, in the set of indices $i,\,j\in \{0,\, 1,\, \dots\, ,\, m\}$ we consider the relation $i\sim j$ whenever $\H^{N-1}(E_i\cap E_j)>0$ and assumption~(\ref{0ornot0}) holds up to swap $i$ and $j$. We also notice that, considering the weakest equivalence relation $\approx$ such that $i\approx j$ whenever $i\sim j$, one actually has $i\approx j$ for every $i,\,j\in \{0,\, 1,\, \dots\, ,\, m\}$.

For every pair of indices $i,\,j$ such that $i\sim j$, we select a point $x_{i,j} \in \partial^* E_i\cap \partial^* E_j$. We define now a positive constant $\bar r>0$ which is much smaller than the distance between any two of the points $x_{i,j}$, and such that for any $i\sim j$ the set $\partial^* E_i\cap \partial^* E_j$ is not entirely contained in the union of all the balls of radius $\bar r$ centered at the points $x_{i,j}$. For any $i,\,j $ with $i\sim j$, we can apply the Infiltration Lemma~\ref{InfilLem} at the point $x_{i,j}$ finding an arbitrary small $r\ll \bar r$ and a cluster $\F$ such that~(\ref{thilvtrue}) holds. There are now two possibilities; either one has $\F=\E$ for any choice of $i,\, j$ and any $r$ small enough, or there is some $i\sim j$ such that $\F\neq \E$ for every admissible $r\ll 1$.\par

Let us first assume that the second case holds, and fix some $i\sim j$ for which $\F\neq \E$ for every admissible $r\ll 1$. Let also $C$ be the constant given by the Infiltration Lemma applied at the point $x_{i,j}$. Since $\alpha>0$, in particular $g$ is continuous in the first variable. Therefore, we already know the validity of the $\eps-\eps^{\frac{N-1}N}$ property with arbitrarily small constant and with $m(m+1)/2$ balls. Therefore, up to possibly reduce the value of $\bar r$, we find some $\bar \eps>0$ such that, for any $\eps\in \R^m$ with $|\eps|<\bar\eps$, there exists a cluster $\G$ with $\E\Delta\G$ contained in the union of at most $m(m+1)/2$ balls of radius $\bar r$ not intersecting $B(x_{i,j},\bar r)$, and so that
\begin{align*}
\big|\G\big| = \big|\E\big| + \eps\,, && P(\G) \leq P(\E) + \frac C2\, |\eps|^{\frac{N-1}N}\,.
\end{align*}
By the Infiltration Lemma, we can then take a cluster $\F$ so that $\E\Delta\F\comp B(x_{i,j},\bar r)$, the properties~(\ref{thilvtrue}) hold, and $0 < |\E\Delta\F| < \bar \eps$. We can now take $\G$ so that $\E\Delta\G$ is contained in the union of at most $m(m+1)/2$ balls of radius $\bar r$ not intersecting $B(x_{i,j},\bar r)$ so that
\begin{align*}
\big|\G\big| = \big|\E\big| -|\E\Delta \F|,, && P(\G) \leq P(\E) + \frac C2\, |\E\Delta \F|^{\frac{N-1}N}\,.
\end{align*}
Defining $\E'$ the cluster which equals $\G$ outside of the ball $B(x_{i,j},\bar r)$, and $\F$ inside of this ball, by construction we have then
\begin{align*}
|\E'| = |\E|\,, && P(\E') < P(\E)\,.
\end{align*}
In other words, $\E'$ has exactly the same volume as $\E$, but strictly smaller perimeter; moreover, $\E'\Delta\E$ is contained in the union of $m(m+1)/2+1$ balls, i.e., the $m(m+1)/2$ balls given by the $\eps-\eps^{\frac{N-1}N}$ property together with $B(x_{i,j},\bar r)$. The conclusion follows then trivially in this case. Indeed, we can reduce $\bar\eps$ so to become much smaller than $P(\E)-P(\E')$. Then, for every $\eps\in\R^m$ with $|\eps|<\bar\eps$, we can easily take a cluster $\E''$, with $\E''\Delta\E'$ contained in the union of the $m(m+1)/2+1$ balls, and so that $|\E''| = |\E'|+\eps = |\E|+\eps$, and so that the difference $P(\E'')-P(\E')$ is arbitrarily small, in particular smaller than $P(\E)-P(\E')$. This is much stronger than the required $\eps-\eps^\beta$ property, since instead of having $P(\E'')\leq P(\E)+ \Cper |\eps|^\beta$ we have $P(\E'')<P(\E)$.\par

To conclude the proof, we have then now to consider the first case above. That is, for any $i\sim j$ there is some $r < \bar r$ such that the cluster $\F$ given by the Infiltration Lemma coincides with $\E$. This is equivalent to say that there exists some $r_{i,j}<\bar r$ such that $B(x_{i,j}, r_{i,j})$ is entirely contained in $E_i\cup E_j$. We can then apply Lemma~\ref{only2} to each of these balls, finding constants $C_{i,j}$ and $\bar\eps_{i,j}$ so that the result of the lemma holds. It is then clear that the $\eps-\eps^\beta$ property holds, with $\beta$ given by~(\ref{defbeta}), up to call $\bar\eps$ the minimum of all the constants $\bar\eps_{i,j}$ and $\Cper$ the maximum of all the constants $C_{i,j}$. The proof is then concluded.
\end{proof}

We conclude our paper with a discussion about the value of $\Cper[t]$ when $\alpha=0$ and $g$ is continuous in the first variable, and with a quick sketch of the proof of Corollary~\ref{toadd} (which is quite standard, for instance it is very similar to the one made in~\cite{PS19} for the case of sets).

\begin{remark}\label{bestC}
Let us consider the case $\alpha=0$ when the function $g$ is continuous in the first variable. In this case, we know the validity of the $\eps-\eps^{\frac{N-1}N}$ property with an arbitrarily small constant $\Cper$. As discussed in Remark~\ref{remCsmall}, this means that one is able to assume $\Cper$ as small as one needs, up to select a sufficiently small $\bar\eps$. This is a consequence of the fact that, in Lemma~\ref{lemmaa=0}, the constant $K$ can be any constant larger than $C\sqrt[N]{\omega_{\bar x}}$, and $\omega_x=0$ for every $x\in\R^N$ since $g$ is continuous in the first variable. However, sometimes it is important to have a more precise estimate of how small can $\Cper$ be taken. More precisely, instead of changing the value of $\bar\eps$, one calls $\Cper[t]$ the best constant that one can take in~(\ref{defepsbeta}) when $|\eps|\leq t$. And then, instead of just observing that $\Cper[t]\searrow 0$ when $t\searrow 0$, one wants to estimate the rate of convergence.\par
This is easily done by checking in detail the proof of Lemma~\ref{lemmaa=0}, in particular the calculations of Step~VIII. Indeed, let us take a large closed ball $A$ such that, for any $0\leq i<j\leq m$ with $\H^{N-1}(\partial^* E_i\cap\partial^* E_j)>0$ one has also $\H^{N-1}(\partial^* E_i\cap\partial^* E_j\cap A)>0$, and call $M=\max \{M_x,\, x\in A\}$ and $\omega(t)=\max\{ \omega_x(t),\, x\in A\}$. Thanks to Lemma~\ref{lemmaa=0}, we get the existence of a constant $C=C(M,N,m)$ such that $\Cper[t]$ is smaller than $C \sqrt[N]{\omega(a \sqrt N/2)}$ if we are able to modify volumes up to $t$ inside cubes of side $a$. We have then to check the relation between $a$ and $t$ (which corresponds to what was called $\bar\eps$ in the proof).\par

We observe that according to~(\ref{bestL}) we chose $L = \omega_x(a)^{-1/N}$, and by~(\ref{bestrho}) we have $\rho L^{N-1}\approx\sqrt[N]{\omega_x(a)}$, which implies $\rho\approx \omega_x(a)$. And finally, (\ref{qualeps}) tells us that
\[
t\approx a^N \omega_x(a)/\rho^N \approx a^N \omega_x(a)^{1-N}\,.
\]
Summarizing we have seen that, up to multiplicative constants only depending on $M,\, N$ and $m$,
\[
\Cper[a^N \omega(a)^{1-N}] \lesssim \omega(a \sqrt N)^{1/N}\,.
\]
Since $\omega(t)\searrow 0$ for $t\searrow 0$, for small $t$ we have then
\[
\Cper[t^N] \leq \Cper[t^N  \omega(t/\sqrt N)^{1-N}/N^{N/2}] = \Cper[(t/\sqrt N)^N  \omega(t/\sqrt N)^{1-N}] \lesssim \omega(t)^{1/N}\,,
\]
which can be rewritten in the more useful form
\[
\Cper[t] \lesssim \sqrt[N]{\omega(t^{1/N})}\,.
\]
\end{remark}

\proofof{Corollary~\ref{toadd}}
Assume that both $f$ and $g$ are globally bounded from above and below, and that the $\eps-\eps^{\frac{N-1}N}$ property with arbitrarily small constant holds. Then, the boundedness of the minimal cluster is very easy to get. Indeed, call $\E_t=\E\cap B_t$ the intersection of the cluster $\E$ with the ball $B_t$ centered at the origin and with radius $t$, and call $v(t)=\big| | \E \setminus \E_t|\big|$ the total volume of the remaining part. The boundedness of $f$ and $g$ ensures that
\[
P(\E\setminus\E_t) \geq C v(t)^{\frac{N-1}N}
\]
for a suitable constant $C$. For $t$ sufficiently large, the $\eps-\eps^{\frac{N-1}N}$ property with small constant allows to modify the cluster $\E_t$ so to get some cluster $\widetilde\E$ with the same volume as $\E$, with
\[
P(\widetilde\E)-P(\E_t) \ll v(t)^{\frac{N-1}N}\,.
\]
Moreover, again the boundedness of $f$ and $g$ implies that
\[
|v'(t)| \approx \H^{N-1}\big(\E\cap \partial B_t\big)\approx P(\E_t) +P(\E\setminus\E_t)-P(\E)\,.
\]
The minimality of $\E$ ensures $P(\E)\leq P(\widetilde\E)$, which by the above estimates give
\[
|v'(t)| \geq C' v(t)^{\frac{N-1}N}
\]
for some constant $C'$. This implies that $v(t)=0$ for $t$ large enough, that is, the cluster is bounded. Hence, the proof of the corollary in the first case is completed (notice that the validity of the $\eps-\eps^\beta$ property with $\beta>(N-1)/N$ implies the validity of the $\eps-\eps^{\frac{N-1}N}$ property with arbitrarily small constant).\par

In the second case, we can easily reduce to the first one. More precisely, assume by contradiction that the cluster $\E$ is not bounded, and in particular (up to renumbering) assume that the unbounded chambers are $E_1,\, E_2,\, \dots\, ,\, E_k$ for some $k\leq m$. The fact that $g$ has a limit at infinity implies that the obscillation of $g$ becomes arbitrarily small for points sufficiently far from the origin. As just noticed in Remark~\ref{bestC}, a small bound on the obscillation allows to get a small constant in Lemma~\ref{lemmaa=0}. Therefore, we can get arbitrarily small constant in Lemma~\ref{lemmaa=0} for points which are in the boundary of the chambers $E_i$ with $1\leq i \leq k$. Because of the possible existence of bounded chambers $E_i$ with $i>k$, this is not sufficient to say that the $\eps-\eps^{\frac{N-1}N}$ property holds with an arbitrarily small constant; however, this is true \emph{for small volumes $\eps\in\R^m$ such that $\eps_i=0$ for every $i>k$}. And in turn, this weaker version of the property is exactly what we need; indeed, since the bounded chambers $E_i$ with $i>k$ are finitely many, for $t$ large enough the cluster $\E\setminus \E_t$ may contain non-empty chambers only for indices $1\leq i\leq k$. Therefore, the argument above for the first case applies also in this second case.
\end{proof}

\end{document}